\documentclass[11pt, preprint]{article}

\usepackage[letterpaper,margin=1.1in]{geometry}
\usepackage{amsmath,amssymb,amsthm,mathtools}
\usepackage{graphicx}
\usepackage{subcaption}
\usepackage{algorithm}
\usepackage{algpseudocode}
\usepackage{booktabs,tabularx,array}

\usepackage{float}
\usepackage{placeins}
\usepackage{xcolor}
\usepackage{listings}
\usepackage[hidelinks]{hyperref}
\usepackage{microtype}
\usepackage{biblatex}
\usepackage{todonotes}

\title{A Geometric View of Adaptive Cross Approximation via Exterior Algebra}%7E
\author{Trevor Loe\thanks{UCLA, Los Angeles, CA (\href{mailto:tgloe@math.ucla.edu}{tgloe@math.ucla.edu})}, Longxiu Huang\thanks{Michigan State, East Lansing, MI (\href{mailto:huangl3@msu.edu}{huangl3@msu.edu})}, Deanna Needell\thanks{UCLA, Los Angeles, CA and Univ. British Columbia, Vancouver, BC (\href{mailto:deanna@math.ucla.edu}{deanna@math.ucla.edu})}}
\date{}

\newtheorem{theorem}{Theorem}[section]
\newtheorem{prop}[theorem]{Proposition}
\newtheorem{lemma}[theorem]{Lemma}
\newtheorem{corollary}[theorem]{Corollary}

\newcolumntype{Y}{>{\raggedright\arraybackslash}X}
\newcolumntype{L}[1]{>{\raggedright\arraybackslash}p{#1}}

\newcommand{\rank}{\operatorname{rank}}
\newcommand{\R}{\mathbb{R}}
\newcommand{\inner}[1]{\langle #1\rangle}
\newcommand{\prob}{\mathbb{P}}
\newcommand{\Exp}{\mathbb{E}}

\newcommand{\Span}{\operatorname{span}}
\renewcommand{\O}{\mathcal{O}}

\newcommand{\algorithmicbreak}{\textbf{break}}

\newcommand{\safeincludegraphics}[2][]{%
  \IfFileExists{#2}{\includegraphics[#1]{#2}}{%
    \fbox{\parbox[c][2.0in][c]{0.82\linewidth}{\centering
      Figure file not included in the supplied draft:\\[0.4em]
      {\footnotesize\nolinkurl{#2}}}}}}

\providecommand{\keywords}[1]
{
  \small	
  \textbf{Keywords. } #1
}

\begin{document}

\maketitle

\begin{abstract}
Adaptive cross approximation (ACA) constructs low-rank CUR approximations from selected rows and columns of a matrix, making it attractive when individual entries are inexpensive to query but forming or repeatedly multiplying by the full matrix is not. Its practical performance, however, is not well explained by existing worst-case bounds, which can be exponentially large and often fail to predict when greedy largest-entry pivoting performs poorly. By viewing the matrix as a cross-gram matrix, we reinterpret the classical determinantal formula for the CUR residual through the lens of exterior algebra. In this representation, each residual entry is a weighted inner product between two $(k+1)$-blades divided by the normalized volume of the pivot block. The formula separates three effects: the weighted magnitudes of singular-vector rows, their angular geometry, and the conditioning of the selected pivots. From this we recover a classical $\sigma_{k+1}$-type estimate, characterize when one pivot annihilates or nearly annihilates additional rows and columns, obtain a quasi-optimality estimate for positive-semidefinite matrices, and bound the rank-one update through an explicit angle-based formula. We use this framework to explain the failure of greedy ACA on an asymmetric spiral-kernel example and to motivate a geometry-aware weighted-mass pivoting rule for radial-basis Galerkin stiffness matrices. In the reported experiments, weighted-mass pivoting often reduces the Frobenius residual relative to greedy pivoting and randomly pivoted Cholesky while retaining comparable runtime on parallel hardware.
\end{abstract}

\keywords{Adaptive cross approximation, CUR decomposition, pseudo-skeleton approximation, exterior algebra, cross approximation pivoting}

\section{Introduction}

Many matrices arising in scientific computing, inverse problems, and data
analysis have rapidly decaying singular values and therefore admit accurate
low-rank approximations. Examples include Galerkin boundary-element matrices
\cite{Bebendorf_AcceleratingGalerkinBEM2006,
Bebendorf_ApproximationBoundaryElement2000}, collocation matrices
\cite{Bebendorf_AdaptiveLowRankApproximation2003}, approximate solution
operators for advection equations
\cite{Zheng_SemiLagrangianAdaptiveRankSLAR2024}, and kernel matrices associated
with smooth positive-definite kernels
\cite{Chen_RandomlyPivotedCholesky2025}. Low-rank representations can reduce
storage, accelerate matrix operations, and facilitate the solution of
large-scale linear systems.

For a matrix $A\in\R^{n\times m}$ with singular values
$\sigma_1\geq \sigma_2\geq\cdots\geq \sigma_{\min\{m,n\}}$, the Eckart--Young--Mirsky theorem identifies
the truncated singular value decomposition as the best rank-$k$ approximation
in both the operator and Frobenius norms. The corresponding errors are
$\sigma_{k+1}$ and
\(
    \left(\sum_{\ell>k}\sigma_\ell^2\right)^{1/2},
\)
respectively. The SVD is therefore the natural approximation benchmark.
However, its factors are generally dense linear combinations of the rows and
columns of $A$, and computing them requires global access to the matrix. Furthermore, computing a full SVD generally has complexity $\O(n^3)$, which is prohibitively expensive for large $n$. 
Randomized SVD and subspace-iteration methods reduce the computational cost,
but they still rely on matrix--vector or matrix--matrix products
\cite{Tropp_RandomizedAlgorithmsLowrank2023}. In applications where selected
entries, rows, or columns can be evaluated inexpensively but repeated global
matrix operations are unavailable or costly, an entrywise approximation
method is especially attractive.

\subsection{CUR decompositions}

A CUR decomposition approximates $A$ using actual columns and rows of the
matrix. Given index sets $I\subseteq[n]$ and $J\subseteq[m]$, define
\(
    C=A(:,J),R=A(I,:).
\)
A general CUR approximation takes the form
\(
    A\approx C WR,
\)
where the core matrix $W$ is constructed from the selected submatrices. When the selected intersection has the appropriate rank, exact CUR decompositions admit several equivalent formulations \cite{Hamm_PerspectivesCUR2020}. In particular, if
\( \rank(A(I,J))=\rank(A)=r,\)
then
\(
    A=A(:,J)A(I,J)^\dagger A(I,:)
\), where $A^\dagger$ represents the pseudoinverse of $A$. 
This work focuses on the square, nonsingular-intersection case
\[
    A\approx A(:,J)A(I,J)^{-1}A(I,:),
\]
which is commonly called a cross, skeleton, or pseudoskeleton approximation.

Because $C$ and $R$ consist of actual columns and rows of the input matrix,
CUR decompositions can retain interpretability, sparsity, localization,
nonnegativity, and other structure that may be obscured by singular-vector
bases. This feature has motivated CUR methods for interpretable data analysis
and feature selection
\cite{Drineas_RelativeErrorCUR2008,Mahoney_CURData2009}, as well as for
subspace clustering, where selected columns and rows are used to construct
similarity matrices
\cite{Aldroubi_CURSubspace2019}. 
CUR has also become a useful computational primitive in large-scale imaging
and data-recovery problems. Robust CUR methods accelerate robust principal
component analysis and have been applied to video foreground--background
separation and face modeling
\cite{Cai_RapidRobustPCA2021,Cai_RobustCUR2021}. Related CUR-based methods
have been developed for matrix completion under cross-concentrated sampling
\cite{Cai_CrossConcentratedSampling2023}, hyperspectral band selection
\cite{Henneberger_HyperspectralCUR2023}, and graph-regularized imaging
problems \cite{Huang_DualGraphCUR2023}. The same principle extends to
multidimensional data: mode-wise tensor CUR decompositions provide
interpretable low-multilinear-rank representations
\cite{Cai_ModeWiseTensorCUR2021}, while robust tensor CUR and t-CUR methods
have been used for tensor robust PCA and coseparable nonnegative tensor
factorization
\cite{Cai_RobustTensorCUR2024,Chen_CoseparableTensorCUR2024}. 
In scientific computing, cross approximation is particularly useful when a
matrix is available through an entry-evaluation routine rather than as a
stored dense array. Adaptive cross approximation has consequently become a
standard component of data-sparse matrix methods for boundary-integral
operators, collocation discretizations, and hierarchical matrices
\cite{Bebendorf_ApproximationBoundaryElement2000,
Bebendorf_AdaptiveLowRankApproximation2003,
Bebendorf_AcceleratingGalerkinBEM2006}. In these applications, only a small
number of adaptively selected rows and columns need to be evaluated.

\subsection{Adaptive cross approximation}

Adaptive cross approximation constructs the index sets $I$ and $J$
iteratively. Suppose that $I_k$ and $J_k$ contain the pivots selected after
$k$ iterations, and define the residual
\[
    E_k
    =
    A-A(:,J_k)A(I_k,J_k)^{-1}A(I_k,:).
\]
The standard greedy strategy chooses the next pivot by \[  (\widehat{i}_k,\widehat{j}_k)  \in \operatorname*{arg\,max}_{i,j}|(E_k)_{ij}|.\]
The residual is then  updated via \(E_{k+1}  = E_k - E_k(:,\widehat{j}_k) (E_k)_{\widehat{i}_k,\widehat{j}_k}^{-1}E_k(\widehat{i}_k,:).\)
This is equivalent to Gaussian elimination with complete pivoting or an incomplete LU factorization. It accesses only selected rows and columns and avoids explicitly forming the inverse of the full pivot block.

Greedy pivoting may also be interpreted as an incremental attempt to increase
the volume of the selected intersection. If $A(I,J)$ is a maximum-volume
$k\times k$ submatrix, then the associated cross approximation satisfies
\[
    \left\|
    A-A(:,J)A(I,J)^{-1}A(I,:)
    \right\|_{\max}
    \leq
    (k+1)\sigma_{k+1}
\]
\cite{GoreinovMaximalVolumeConcept}. Finding a maximum-volume submatrix is
not only NP-hard, but does not admit any polynomial time approximations  \cite{Civril_SelectingMaximumVolume2009}, which motivates greedy,
local-volume, randomized, and structure-aware pivoting strategies. Some popular examples
include rook pivoting \cite{Foster_GrowthFactorEfficiency1997}, randomly
pivoted Cholesky for positive-semidefinite matrices
\cite{Chen_RandomlyPivotedCholesky2025}, and randomly pivoted LU for general
matrices \cite{Gilles_LowRankApproximationRandomly2026}.

\subsection{Related work}

\paragraph{CUR approximation guarantees.}
The foundational theory of pseudoskeleton and maximum-volume approximation
connects the error of a cross approximation to the volume of its intersection
block and establishes the existence of near-optimal row and column subsets
\cite{Goreinov_TheoryPseudoskeletonApproximations1997,
Zamarashkin_ExistenceNearlyOptimal2018}. Notably, Zamarashkin and Osinsky showed, via a probability argument that there exists row and column selections $I,J$ with error \[
\|A-A(:,J)A(I,J)^{-1}A(I,:)\|_F^2 \leq (1+k)\sum_{\ell=k+1}^n \sigma_{\ell}^2.
\]
These results have been extended to
oversampled and rectangular intersections, average-case analysis, and sharper
bounds involving more of the singular spectrum
\cite{Osinsky_Zamarashkin_Pseudoskeleton2018,
Mikhalev_RectangularMaxvol2018,
Zamarashkin_Osinsky_MaxvolAverage2021,
DeHoog_Hegland_Pseudoskeleton2023,
Osinsky_CloseOptimalCross2025}.

A complementary literature develops constructive CUR algorithms. Randomized
sampling methods provide relative Frobenius-error guarantees
\cite{Drineas_RelativeErrorCUR2008,Wang_AdaptiveCURNystrom2013,
Boutsidis_OptimalCUR2017}, while deterministic and hybrid approaches use
approximate singular subspaces, rank-revealing factorizations, sketches, or
spectrum-revealing pivoting
\cite{Sorensen_DEIMCUR2016,Cortinovis_FrobeniusSubset2020,
Voronin_EfficientCUR2017,Dong_RandomizedCUR2023,
Anderson_SpectralGapCUR2015,Chen_SpectrumRevealingCUR2020}.
Recent work based on generalized interlacing families gives additional
spectral-norm guarantees for CUR and row subset selection
\cite{Cai_GeneralizedInterlacingCUR2025}. These methods provide strong global
error bounds, but they often use oversampling, global subspace information, or
a core matrix different from the inverse of a square pivot block. Error bounds for such methods typically take the form \[
\|A - A(:,J)ZA(I,:)\|_\xi^2 \leq (1+\epsilon)\|A-A^k\|_\xi^2,
\]
where $A^k$ is the truncated SVD, and $\|\cdot\|_\xi$ is some unitary invariant norm (e.g., the Frobenius or operator norm). The number of rows/columns actually sampled is some polynomial in $k$ and $1/\epsilon$. 

\paragraph{Exactness, perturbation, and stability.}
Rank-based characterizations identify when selected rows and columns give an
exact CUR decomposition \cite{Hamm_PerspectivesCUR2020}. Sampling stability,
perturbation bounds, and oversampling analyses describe how CUR behaves under
noise and ill-conditioning
\cite{Hamm_StabilitySamplingCUR2020,Hamm_PerturbationsCUR2021,
Park_CURStabilityOversampling2025}. A local perturbation analysis further
relates fixed-index, rank-truncated CUR to a sampling-induced oblique
tangent-space projector \cite{Huang_LocalTangentCUR2026}. These results analyze
the quality of prescribed index sets; in this work we will mostly focus on the geometric effect of adding an additional ACA pivot.

\paragraph{ACA and pivot selection.}
Classical ACA convergence analyses arise in the approximation of non-local operators and  use regularity assumptions on
the underlying kernel
\cite{Bebendorf_ApproximationBoundaryElement2000,
Bebendorf_AdaptiveLowRankApproximation2003}; kernel-independent results
tend to be much weaker and often overestimate the error
\cite{Bauer_KernelIndependentACA2022}. For general matrices, complete-pivoting
bounds are often tied to maximum volume and Gaussian-elimination growth factors
\cite{Cortinovis_MaximumVolumeSubmatrices2019}. In particular, it has been shown that the greedy algorithm achieves error \[
\|A - A(:,J)A(I,J)^{-1}A(I,:)\|_{\text{max}} \leq 4^k \rho_k \sigma_{k+1},
\]
where $\rho_k$ is the growth factor of Gaussian elimination with complete pivoting, shown by Wilkinson to be less than $2\sqrt{k+1}(k+1)^{\log(k+1)/4}$ \cite{Wilkinson_ErrorAnalysisDirect1961}. 
Related analyses for
positive-semidefinite matrices include pivoted Cholesky and deterministic
maximum-volume selection
\cite{Harbrecht_LowrankApproximationPivoted2012,Massei_SPSDMaxvol2022}. Randomly pivoted Cholesky and LU replace the largest-entry rule with residual-dependent sampling, where pivots are selected with probability proportional to the diagonal magnitude (RPC) or entry magnitude (RPLU). These strategies admit expected-error guarantees a factor of $2^k$ away from optimal \cite{Chen_RandomlyPivotedCholesky2025,Gilles_LowRankApproximationRandomly2026}. Adaptive randomized pivoting provides
a related framework for column selection and cross approximation based on leverage score sampling
\cite{Cortinovis_AdaptiveRandomizedPivoting2026}. 
Existing analyses therefore control CUR and ACA through singular values,
submatrix volume, conditioning, growth factors, or expected residual norms. Without imposing a strict structure on the matrix (such as double diagonal dominance) or oversampling, error analyses always admit an exponential factor of at least $2^k$. This error growth is believed to be asymptotically tight for general matrices \cite{Harbrecht_LowrankApproximationPivoted2012}. 

Existing error analysis does not directly explain the local geometric mechanism by which some pivots
can strongly effect many other rows and columns, nearly annihilating them. The exterior-algebraic
framework developed here is intended to make this mechanism explicit.

\subsection{Contributions and organization}

We develop a geometric framework for ACA by analyzing $A$ as an inner-product matrix. The SVD provides us with a guaranteed decomposition from which we can analyze the entries of $A$. As we will see in section \ref{sec:galerkin_experiments}, any decomposition (including a non-orthogonal decomposition) is sufficient. Let
\(    A=U\Sigma V^\top,
\)
and let $u_i$ and $v_j$ denote the rows of $U$ and $V$, respectively. Then \(  A_{ij}=\langle u_i,v_j\rangle_\sigma,\) where $\langle\cdot,\cdot\rangle_\sigma$ is the inner product weighted by the singular values. Note that $u_i$ and $v_j$ are not the left and right singular vectors, but rows where the $i$th element comes from the $i$th singular vector. Lifting this identity to the exterior powers gives a
representation of each CUR residual entry as an inner product between two
$(k+1)$-blades, divided by the determinant of a normalized pivot block.

The main contributions are as follows.
\begin{enumerate}
    \item
    We derive an exterior-algebraic formula for the entrywise CUR residual.
    The formula separates the weighted magnitudes of the unselected
    singular-vector rows, the interaction between the selected and
    unselected blades, and the normalized volume of the pivot block.

    \item
    We use this representation to recover singular-value error estimates and
    to characterize exact and approximate annihilation of rows and columns
    beyond the selected pivot indices. The analysis shows that the relevant
    phenomenon is linear dependence, or near dependence, among leading
    singular-vector coordinates. We also specialize the framework to symmetric
    positive-semidefinite matrices for a more explicit bound.

    \item
    For a rank-one update, we reduce the exterior-algebraic expression to a
    bound involving weighted sines and cosines. This provides a clear geometric understanding of the residual, allowing us to better understand the effectiveness of ACA in terms of the singular value decay \textit{and} and singular vector geometry. 

    \item
    We apply the geometric interpretation to an asymmetric spiral-kernel
    example in which standard greedy pivoting performs poorly. For radial
    basis Galerkin stiffness matrices, the same interpretation motivates a
    weighted-mass pivoting strategy that seeks to remove a large amount of
    correlated residual mass rather than merely selecting the largest
    residual entry.
\end{enumerate}

Section \ref{sec:all_theory} develops the exterior-algebraic residual
framework, and Sections \ref{sec:understand_k_blades}--\ref{sec:single_bound}
study its geometric and rank-one consequences. Section
\ref{sec:weighted_mass_pivot} introduces the weighted-mass pivoting rule, and
Section \ref{sec:numerics} presents the numerical experiments. Appendix
\ref{apx:cosine_vs_coherence} records additional observations concerning
weighted cosine and singular-vector coherence, while Appendix
\ref{apx:nearest-neighbor} describes the nearest-neighbor preprocessing used
in the implementation.

\section{Exterior-algebraic framework}\label{sec:all_theory}

\subsection{Notation and preliminaries}\label{sec:main_res_and_prelim}

Let $A\in\R^{n\times m}$ have rank $r$ and thin SVD \( A=U\Sigma V^\top, U\in\R^{n\times r},V\in\R^{m\times r},\Sigma=\operatorname{diag}(\sigma_1,\ldots,\sigma_r), \)
with $\sigma_1\geq\cdots\geq\sigma_r>0$. We write $u_i\in\R^r$ for the $i$th row of $U$ and $v_j\in\R^r$ for the $j$th row of $V$. Define the $\sigma$-weighted inner product and norm by
\[
\inner{x,y}_\sigma=x^\top\Sigma y
=\sum_{q=1}^r\sigma_qx_qy_q,
\qquad
\|x\|_\sigma=\sqrt{\inner{x,x}_\sigma}.
\]
Whenever $x$ and $y$ have nonzero weighted norm, their weighted angle is determined by
\[
\cos(\angle_\sigma(x,y))
=\frac{\inner{x,y}_\sigma}{\|x\|_\sigma\|y\|_\sigma}.
\]
The SVD then gives the cross-Gram representation
\(
A_{ij}=\inner{u_i,v_j}_\sigma.
\) 
For a positive-semidefinite matrix, $U=V$ may be chosen and this becomes an ordinary Gram representation.

For $q\geq1$, the exterior power $\bigwedge^q\R^r$ is generated by simple $q$-vectors, or $q$-blades, denoted by \(x_1\wedge\cdots\wedge x_q.\) 
The bilinear product $x\wedge y$ is called the exterior or wedge product of $x$ and $y$. The weighted inner product on simple blades is defined by
\[
\inner{x_1\wedge\cdots\wedge x_q,
       y_1\wedge\cdots\wedge y_q}_\sigma
=
\det\!\big[\inner{x_a,y_b}_\sigma\big]_{a,b=1}^q,
\]
and extended bilinearly. In particular, the weighted norm of a blade is the weighted volume of the parallelotope spanned by its factors. The alternating multilinear vector space $\bigwedge^q \R^r$ has dimension ${n\choose k}$, and the direct sum of all such subspaces for $1\leq q\leq n$ is the exterior algebra of $\R^r$. For a more in-depth discussion of exterior algebra, \cite{Winitzki_LinearAlgebraExterior2009} is a comprehensive reference. 

Exterior algebra provides the geometric language used throughout the remainder of the paper. To streamline the presentation, Table \ref{tab:notation} summarizes the notation that will appear most frequently.  

\begin{table}[t]
\caption{Principal notation.}
\label{tab:notation}
\centering
\small
\renewcommand{\arraystretch}{1.15}
\begin{tabular}{@{}ll@{}}
\toprule
Symbol & Meaning \\
\midrule
$A\in\R^{n\times m}$ & Matrix to be approximated \\
$I=(i_1,\ldots,i_k)$, $J=(j_1,\ldots,j_k)$ & Ordered row and column pivot tuples \\
$S=A(I,J)$ & $k\times k$ pivot block \\
$C=A(:,J)$, $R=A(I,:)$ & Selected columns and rows \\
$u_i$, $v_j$ & Rows of the left and right singular-vector matrices \\
$\inner{x,y}_\sigma$, $\|x\|_\sigma$ & Singular-value-weighted inner product and norm \\
$\angle_\sigma(x,y)$ & Angle induced by $\inner{\cdot,\cdot}_\sigma$ \\
$u_I=u_{i_1}\wedge\cdots\wedge u_{i_k}$ & Blade generated by the selected left singular-vector rows \\
$v_J=v_{j_1}\wedge\cdots\wedge v_{j_k}$ & Blade generated by the selected right singular-vector rows \\
$\bar S$ & Normalized pivot block, $\bar S_{ab}=\cos(\angle_\sigma(u_{i_a},v_{j_b}))$ \\
$\mathbb{I}$ & Identity matrix \\
\bottomrule
\end{tabular}
\end{table}

We now connect these geometric notions with the algebraic structure of skeleton approximation. The key starting point is the classical determinantal identity for the skeleton residual \cite{Cortinovis_MaximumVolumeSubmatrices2019}.  If
\(
E=A-CS^{-1}R,
\)
then, with the new row and column appended to the ordered pivot tuples,
\[
E_{\ell p}
=
\frac{\det A(I+\ell,J+p)}{\det A(I,J)}.
\]
Here and throughout, $I+\ell=(i_1,...,i_k, \ell)$, so the $+$ operator appends the new index to the ordered tuple. 
The numerator and denominator are determinants of weighted cross-Gram matrices, so we can transform the identity into a geometric quantity via exterior algebra. 

The results are developed in the order in which they are used. Theorem
\ref{thm:basic_res_form} gives the exterior-algebraic residual formula and its normalized form. The projected-dependence estimate in Theorem
\ref{thm:k_blade_inner_bound1} then yields the classical max-norm estimate in Corollary
\ref{corr:sig_k_1_bound} and the exact-annihilation criterion in Corollary
\ref{corr:zero_M_sing_val}. Lemmas \ref{lem:simple_denom_det_bound} and
\ref{lem:denom_lower_bound} study the normalized pivot determinant. Theorem
\ref{thm:sym_res_trace} gives the positive-semidefinite trace estimate and  Theorem
\ref{thm:single_iter_cos_bound} specializes the residual geometry to one ACA step. We begin with the residual formula.

\subsection{The determinantal residual as a blade inner product}\label{sec:main_useful_lemma}

We now establish the residual formula. For an increasing multi-index
$K=(q_1<\cdots<q_s)$, we write
\[
e_K=e_{q_1}\wedge\cdots\wedge e_{q_s},
\qquad
\sigma_K=\prod_{q\in K}\sigma_q.
\]
If $X=[x_1\ \cdots\ x_s]\in\R^{r\times s}$, then the coordinate expansion of a simple blade is
\begin{equation}\label{eq:wedge-coordinate-expansion}
x_1\wedge\cdots\wedge x_s
=
\sum_{|K|=s}\det X(K,:)\,e_K.
\end{equation}
Consequently,
\begin{equation}\label{eq:weighted-blade-coordinate-inner}
\inner{x_1\wedge\cdots\wedge x_s,
       y_1\wedge\cdots\wedge y_s}_\sigma
=
\sum_{|K|=s}\sigma_K\det X(K,:)\det Y(K,:).
\end{equation}
  Observe that \eqref{eq:weighted-blade-coordinate-inner} is the Cauchy--Binet expansion of the determinant definition of the induced inner product.

\begin{theorem} \label{thm:basic_res_form}
Let $I=(i_1,\ldots,i_k)$ and $J=(j_1,\ldots,j_k)$ be ordered pivot tuples, and assume that
$S=A(I,J)$ is nonsingular. Define
\[
C=A(:,J),\qquad R=A(I,:),\qquad E=A-CS^{-1}R,
\]
and let
\(
u_I=u_{i_1}\wedge\cdots\wedge u_{i_k}, 
v_J=v_{j_1}\wedge\cdots\wedge v_{j_k}.
\)
Then, for every $\ell\in[n]$ and $p\in[m]$,
\begin{equation}\label{eq:raw-blade-residual}
E_{\ell p}
=
\frac{\inner{u_I\wedge u_\ell,\,v_J\wedge v_p}_\sigma}
     {\det S}.
\end{equation}
Suppose additionally that all weighted norms appearing below are nonzero. Set
\[
\widehat u_i=\frac{u_i}{\|u_i\|_\sigma},
\qquad
\widehat v_j=\frac{v_j}{\|v_j\|_\sigma},
\qquad
\bar S_{ab}=\inner{\widehat u_{i_a},\widehat v_{j_b}}_\sigma.
\]
Then
\begin{equation}\label{eq:normalized-blade-residual}
E_{\ell p}
=
\frac{\|u_\ell\|_\sigma\|v_p\|_\sigma
\inner{\widehat u_I\wedge\widehat u_\ell,
       \widehat v_J\wedge\widehat v_p}_\sigma}
{\det\bar S},
\end{equation}
where $\widehat u_I=\widehat u_{i_1}\wedge\cdots\wedge\widehat u_{i_k}$ and similarly for $\widehat v_J$.
\end{theorem}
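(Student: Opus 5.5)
The plan is to combine the classical determinantal identity for the skeleton residual (quoted above from \cite{Cortinovis_MaximumVolumeSubmatrices2019}),
\[
E_{\ell p}=\frac{\det A(I+\ell,J+p)}{\det A(I,J)},
\]
with the cross-Gram representation $A_{ij}=\inner{u_i,v_j}_\sigma$. If the determinantal identity is not taken as given, I will rederive it from the Schur complement. Write the bordered matrix
\[
M=\begin{pmatrix} S & A(I,p)\\ A(\ell,J) & A_{\ell p}\end{pmatrix}.
\]
Then
\[
\det M=\det S\,\bigl(A_{\ell p}-A(\ell,J)S^{-1}A(I,p)\bigr)=\det S\cdot E_{\ell p},
\]
since $E_{\ell p}=A_{\ell p}-C(\ell,:)S^{-1}R(:,p)$ and $C(\ell,:)=A(\ell,J)$, $R(:,p)=A(I,p)$. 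Note that $\det M=\det A(I+\ell,J+p)$ with the appended ordering. This also covers $\ell\in I$ or $p\in J$: there $M$ has a repeated row or column, so both sides vanish.

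Next I identify both determinants as blade inner products. By the cross-Gram representation, the $(k+1)\times(k+1)$ matrix $A(I+\ell,J+p)$ has entries $\inner{x_a,y_b}_\sigma$, where $(x_1,\dots,x_{k+1})=(u_{i_1},\dots,u_{i_k},u_\ell)$ and $(y_1,\dots,y_{k+1})=(v_{j_1},\dots,v_{j_k},v_p)$. By the definition of the weighted inner product on simple blades,
\[
\det A(I+\ell,J+p)=\inner{u_I\wedge u_\ell,\,v_J\wedge v_p}_\sigma .
\]
Similarly $\det S=\inner{u_I,v_J}_\sigma$, although the statement keeps $\det S$ in the denominator. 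Together with the Schur complement step, this yields \eqref{eq:raw-blade-residual}.

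For the normalized form \eqref{eq:normalized-blade-residual}, I will use multilinearity of the wedge product, so that
\[
u_I=\Bigl(\prod_a\|u_{i_a}\|_\sigma\Bigr)\widehat u_I,\qquad u_I\wedge u_\ell=\Bigl(\prod_a\|u_{i_a}\|_\sigma\Bigr)\|u_\ell\|_\sigma\,\widehat u_I\wedge\widehat u_\ell,
\]
and likewise for the $v$'s. On the matrix side, $S=D_u\bar S D_v$ with $D_u=\operatorname{diag}(\|u_{i_a}\|_\sigma)$ and $D_v=\operatorname{diag}(\|v_{j_b}\|_\sigma)$, so $\det S=\det D_u\det D_v\det\bar S$. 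Substituting into \eqref{eq:raw-blade-residual}, the pivot-row and pivot-column norm products appear in both numerator and denominator and cancel. What remains is exactly $\|u_\ell\|_\sigma\|v_p\|_\sigma\inner{\widehat u_I\wedge\widehat u_\ell,\widehat v_J\wedge\widehat v_p}_\sigma/\det\bar S$. The nonzero-norm hypothesis is used only to make the normalizations well defined. Nonsingularity of $S$ already forces $\det\bar S\neq0$.

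The only step needing care is the bookkeeping. The ordered tuples must match the row/column order of the determinant, and the appended index must sit in the last slot on both sides so that no sign discrepancy arises. The rest is direct substitution, and I expect no genuine obstacle.
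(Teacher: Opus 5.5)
Your proposal is correct and follows the paper's proof essentially step for step. Both use the Schur-complement identity $\det A(I+\ell,J+p)=\det(S)\,E_{\ell p}$, identify this determinant as $\inner{u_I\wedge u_\ell,\,v_J\wedge v_p}_\sigma$ via the cross-Gram representation, and cancel the pivot-row and pivot-column norms using multilinearity and $S=D_u\bar S D_v$. Your remarks on the case $\ell\in I$ or $p\in J$ and on keeping the appended index last are harmless additions that the paper leaves implicit.
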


\begin{proof}
Appending row $\ell$ and column $p$ to the pivot block gives
\[
A(I+\ell,J+p)
=
\begin{pmatrix}
S & A(I,p)\\
A(\ell,J) & A_{\ell p}
\end{pmatrix}.
\]
The Schur-complement identity therefore yields
\begin{equation}\label{eq:determinantal-residual}
\det A(I+\ell,J+p)
=
\det(S)\big(A_{\ell p}-A(\ell,J)S^{-1}A(I,p)\big)
=
\det(S)E_{\ell p}.
\end{equation}
Because $A_{ij}=\inner{u_i,v_j}_\sigma$, the augmented submatrix is a weighted cross-Gram matrix. By the definition of the induced exterior inner product,
\[
\det A(I+\ell,J+p)
=
\inner{u_I\wedge u_\ell,\,v_J\wedge v_p}_\sigma.
\]
Combining this identity with \eqref{eq:determinantal-residual} proves \eqref{eq:raw-blade-residual}.

For the normalized formula, factor one weighted norm from every row and column of $S$:
\[
\det S
=
\left(\prod_{a=1}^k\|u_{i_a}\|_\sigma\right)
\left(\prod_{b=1}^k\|v_{j_b}\|_\sigma\right)
\det\bar S.
\]
Multilinearity of the wedge product gives the same selected-row and selected-column factors in the numerator of \eqref{eq:raw-blade-residual}, together with the additional factors $\|u_\ell\|_\sigma$ and $\|v_p\|_\sigma$. Cancelling the common factors proves \eqref{eq:normalized-blade-residual}.
\end{proof}

Observe also that the denominator of \eqref{eq:normalized-blade-residual} can be written $\inner{\widehat u_I, \widehat v_J}_\sigma$, providing a slightly more consistent formula. There will not be any qualitative difference between analyzing the determinantal version and the $k$-blade version of the denominator, so the determinantal version will typically be used. 

Theorem \ref{thm:basic_res_form} exposes a distinction that is hidden in the unnormalized determinant. The magnitudes of the selected singular-vector rows cancel between the numerator and denominator; what remains is the normalized pivot volume $\det\bar S$. The unselected row and column magnitudes remain, while the interaction between selected and unselected indices is encoded by the two $(k+1)$-blades. Thus, maximizing $|\det S|$ and maximizing $|\det\bar S|$ need not favor the same pivots. We note that this result does not automatically imply that maximizing $|\det\bar{S}|$ is a better strategy than maximizing $|\det S|$. As we will see, selecting pivots of extremely small $\sigma$-norm can cause blow-up in the numerator. 

We next record two elementary consequences of the coordinate expansion. They will be used repeatedly below.

\begin{lemma} \label{lem:exterior_lin_dep}
Let $x_1,\ldots,x_s\in\R^r$, and let $P_M:\R^r\to\R^M$ be the projection onto the first $M$ coordinates. If
$P_Mx_1,\ldots,P_Mx_s$ are linearly dependent, then
\[
\det X(K,:)=0
\qquad
\text{for every }K\subseteq[M]\text{ with }|K|=s,
\]
where $X=[x_1\ \cdots\ x_s]$. Equivalently, every coordinate of
$x_1\wedge\cdots\wedge x_s$ supported entirely in the first $M$ directions vanishes.
\end{lemma}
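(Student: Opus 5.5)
The plan is to observe that for any $K\subseteq[M]$ with $|K|=s$, the submatrix $X(K,:)$ is obtained from the projected matrix $P_MX=[P_Mx_1\ \cdots\ P_Mx_s]\in\R^{M\times s}$ by selecting the rows indexed by $K$. Indeed, since $K$ lies inside the first $M$ coordinates, the rows of $X$ indexed by $K$ coincide with the rows of $P_MX$ indexed by $K$. The whole argument therefore reduces to the fact that a linear dependence among the columns of $P_MX$ passes to every row-submatrix of $P_MX$.

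Concretely, I would argue as follows. By hypothesis there is a nonzero $c\in\R^s$ with $\sum_a c_aP_Mx_a=0$, that is, $(P_MX)c=0$. Restricting this identity to the rows indexed by $K$ gives $X(K,:)\,c=0$. Since $X(K,:)$ is a square $s\times s$ matrix with a nontrivial kernel, its determinant vanishes. If $M<s$, there is no such $K$ and the statement is vacuous.

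For the ``equivalently'' clause, I would invoke the coordinate expansion \eqref{eq:wedge-coordinate-expansion}. The coefficient of $e_K$ in $x_1\wedge\cdots\wedge x_s$ is exactly $\det X(K,:)$. The coordinates supported entirely in the first $M$ directions are precisely those with $K\subseteq[M]$, so these coefficients vanish by the previous step. The converse direction can also be noted: if all such minors vanish, then $P_MX$ has rank less than $s$, so the projected vectors are dependent. It is not needed, however, since only the stated direction is claimed.

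I expect no real obstacle here. The only point needing care is bookkeeping: making sure that the index set $K$ refers to rows of $X$ (coordinates in $\R^r$) and that restriction to $K\subseteq[M]$ commutes with $P_M$.
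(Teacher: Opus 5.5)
Your proof is correct and takes the same route as the paper: both note that $X(K,:)$ is a row submatrix of $P_MX$ for $K\subseteq[M]$, conclude that its determinant vanishes, and then read off the blade coordinates from the coordinate expansion. The only difference is cosmetic: you restrict an explicit kernel vector $c$ to the rows in $K$, whereas the paper observes that $P_MX$ has rank less than $s$, so all of its $s\times s$ minors vanish.
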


\begin{proof}
For $K\subseteq[M]$, the matrix $X(K,:)$ is a row submatrix of
$[P_Mx_1\ \cdots\ P_Mx_s]$. The latter has rank less than $s$, so every $s\times s$ minor is zero. The conclusion follows from \eqref{eq:wedge-coordinate-expansion}.
\end{proof}

\begin{lemma} \label{lem:k_blade_inner_form}
For $X=[x_1\ \cdots\ x_s]$ and $Y=[y_1\ \cdots\ y_s]$,
\[
\inner{x_1\wedge\cdots\wedge x_s,
       y_1\wedge\cdots\wedge y_s}_\sigma
=
\sum_{|K|=s}\sigma_K\det X(K,:)\det Y(K,:).
\]
In particular,
\(
\|x_1\wedge\cdots\wedge x_s\|_\sigma^2
=
\sum_{|K|=s}\sigma_K\det X(K,:)^2.
\)
\end{lemma}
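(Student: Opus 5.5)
The identity follows from the determinant definition of the weighted blade inner product together with the Cauchy--Binet formula. The second statement is then the special case $Y=X$.

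First, I will express the Gram-type matrix as a matrix product. The $(a,b)$ entry of $X^\top\Sigma Y$ is $x_a^\top\Sigma y_b=\inner{x_a,y_b}_\sigma$. By the definition of the induced inner product on simple blades,
\[
\inner{x_1\wedge\cdots\wedge x_s,\,y_1\wedge\cdots\wedge y_s}_\sigma=\det\!\big(X^\top\Sigma Y\big).
\]
Next, I will apply the Cauchy--Binet formula to the product of the $s\times r$ matrix $X^\top$ and the $r\times s$ matrix $\Sigma Y$:
\[
\det(X^\top\Sigma Y)=\sum_{|K|=s}\det X^\top(:,K)\,\det(\Sigma Y)(K,:).
\]
Here $\det X^\top(:,K)=\det X(K,:)$ by transpose invariance. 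The matrix $(\Sigma Y)(K,:)$ equals $\Sigma(K,K)\,Y(K,:)$, since $\Sigma$ is diagonal. Its determinant is therefore $\sigma_K\det Y(K,:)$. Substituting these gives the claimed sum.

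If $s>r$, both sides vanish: no $K$ exists, and $X^\top\Sigma Y$ has rank at most $r<s$. This is consistent with the usual convention that the empty sum is zero.

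Alternatively, I could use the coordinate expansion \eqref{eq:wedge-coordinate-expansion} together with the fact that the $e_K$ are $\sigma$-orthogonal with $\inner{e_K,e_L}_\sigma=\delta_{KL}\sigma_K$. This fact itself follows from the determinant definition, because $[\inner{e_{q_a},e_{q'_b}}_\sigma]$ is a diagonal matrix with entries $\sigma_{q_a}$ when $K=L$, and it has a zero row otherwise. Bilinearity then gives the same formula. Either way the content is Cauchy--Binet, so the only point needing care is the bookkeeping of the weight $\Sigma(K,K)$ when restricting to rows $K$; no step presents a real obstacle.

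Finally, I will set $Y=X$ to obtain $\|x_1\wedge\cdots\wedge x_s\|_\sigma^2=\sum_{|K|=s}\sigma_K\det X(K,:)^2$.
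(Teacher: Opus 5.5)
Your proof is correct and is essentially the paper's: the paper just cites the coordinate identity \eqref{eq:weighted-blade-coordinate-inner}, which it notes is the Cauchy--Binet expansion of the determinant definition $\det(X^\top\Sigma Y)$, and then sets $Y=X$. Your handling of the diagonal factor $\Sigma(K,K)$, and your alternative via the $\sigma$-orthogonality of the $e_K$, fill in details the paper leaves implicit.
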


\begin{proof}
The first identity is \eqref{eq:weighted-blade-coordinate-inner}; setting $X=Y$ gives the second.
\end{proof}

These formulas provide insight into the precise effect of the singular spectrum, as a blade coordinate indexed by $K$ is weighted by the product $\sigma_K$. If leading-coordinate dependence forces all coordinates with $K\subseteq[M]$ to vanish, the non-zero terms must all contain some tail singular values. 

\subsection{Geometry of the residual blades}\label{sec:understand_k_blades}

The numerator in \eqref{eq:normalized-blade-residual} depends on two related features: the magnitudes of the residual blades and the angle between them. We first record an idealized calculation that helps interpret the angular term, and then prove a deterministic estimate that links blade magnitude to the tail of the singular spectrum.

\subsubsection{An idealized random-blade model}\label{sec:k_blade_ang}

The space $\bigwedge^s\R^r$ has dimension $\binom{r}{s}$. A simple blade generated by $s$ vectors occupies a highly structured subset of this space, so its coordinates are not independent. Nevertheless, an independent-coordinate model provides a useful baseline for the scale of a weighted blade inner product.

\begin{theorem} \label{thm:k_blade_avg_inner}
Let \( X=\sum_{|K|=s}X_K e_K,  Y=\sum_{|K|=s}Y_K e_K,\) where all $X_K$ and $Y_K$ are independent standard Gaussian variables. Then
\[\Exp\inner{X,Y}_\sigma^2= \sum_{|K|=s}\sigma_K^2,
\qquad \Exp\|X\|_\sigma^2 = \Exp\|Y\|_\sigma^2=\sum_{|K|=s}\sigma_K.
\]
\end{theorem}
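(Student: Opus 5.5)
The plan is to expand both quantities in the coordinate basis $\{e_K\}$ and then use independence. The weighted inner product on $\bigwedge^s\R^r$ is diagonal in this basis, with weight $\sigma_K$ on $e_K$. This follows by taking $X$ and $Y$ to be coordinate blades in Lemma \ref{lem:k_blade_inner_form}: for increasing multi-indices $K,L$ we have $\inner{e_K,e_L}_\sigma=\det[\sigma_a\delta_{ab}]_{a\in K,b\in L}$, which equals $\sigma_K$ if $K=L$ and $0$ otherwise. Extending bilinearly gives
\[
\inner{X,Y}_\sigma=\sum_{|K|=s}\sigma_K X_KY_K,
\qquad
\|X\|_\sigma^2=\sum_{|K|=s}\sigma_K X_K^2 .
\]

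For the norm identity, I take expectations term by term. Since $\Exp X_K^2=1$, we get $\Exp\|X\|_\sigma^2=\sum_K\sigma_K$, and the same computation applies to $Y$.

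For the squared inner product, I expand the square as a double sum:
\[
\inner{X,Y}_\sigma^2=\sum_{K}\sum_{L}\sigma_K\sigma_L X_KY_KX_LY_L .
\]
The off-diagonal terms $K\neq L$ vanish in expectation. Indeed, by independence $\Exp[X_KX_LY_KY_L]=\Exp[X_K]\Exp[X_L]\Exp[Y_K]\Exp[Y_L]=0$, because the four variables are distinct and have mean zero. For the diagonal terms $K=L$, independence of $X_K$ and $Y_K$ gives $\Exp[X_K^2Y_K^2]=\Exp[X_K^2]\,\Exp[Y_K^2]=1$. Summing the diagonal contributions yields $\sum_K\sigma_K^2$.

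There is no real obstacle in this argument. The only point that needs care is checking the orthogonality of the basis blades $e_K$ under the weighted inner product, which reduces to the determinant of a diagonal submatrix of $\Sigma$. It is also worth noting that Gaussianity is not essential: the calculation only uses zero mean, unit variance and independence.
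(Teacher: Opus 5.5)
Your proposal is correct and follows the paper's own proof: write $\inner{X,Y}_\sigma=\sum_K\sigma_KX_KY_K$ in the basis $\{e_K\}$, let the cross terms vanish by independence and centering, and use $\Exp X_K^2Y_K^2=1$ and $\Exp X_K^2=1$. Your check that $\inner{e_K,e_L}_\sigma=\sigma_K\delta_{KL}$ just makes explicit a step the paper leaves implicit.
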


\begin{proof}
In the weighted exterior basis,
\(
\inner{X,Y}_\sigma=\sum_{|K|=s}\sigma_KX_KY_K.
\)
After squaring and taking expectations, all cross terms vanish by independence and centering, while
$\Exp(X_K^2Y_K^2)=1$. This gives the first identity. The second follows directly from
$\|X\|_\sigma^2=\sum_K\sigma_KX_K^2$ and $\Exp X_K^2=1$.
\end{proof}

When the numerator and the two norms concentrate around their typical scales, Theorem \ref{thm:k_blade_avg_inner} suggests the heuristic
\begin{equation}\label{eq:random-blade-angle-heuristic}
\cos^2\angle_\sigma(X,Y)
\approx
\frac{\sum_{|K|=s}\sigma_K^2}
     {\left(\sum_{|K|=s}\sigma_K\right)^2}.
\end{equation}
For uniform weights, the right-hand side is $1/\binom{r}{s}$, the familiar inverse-dimension scale. Equation \eqref{eq:random-blade-angle-heuristic} is an simple model rather than a theorem about the simple blades generated by singular-vector rows; those blades have far fewer degrees of freedom and strongly dependent coordinates. Numerically, it was found that equation \eqref{eq:random-blade-angle-heuristic} is an accurate estimate for the expected weighted cosines of $k$-blades generated by random gaussian vectors, when the singular values do not decay too quickly. This is to be expected from the domain in which the CLT applies.

\subsubsection{Leading-coordinate dependence and tail weights}

Dependence among the first $M$ singular coordinates removes every blade component supported entirely in those coordinates. For a bivector this can be seen directly. Let
\(
x=(1,2,2,3,4,5,2)^\top, 
y=(2,4,4,1,-1,0,2)^\top.
\)
Their first three coordinates are proportional. Under the matrix representation
$x\wedge y=xy^\top-yx^\top$, one obtains
\[
\begin{pmatrix}
0&0&0&-5&-9&-10&-2\\
0&0&0&-10&-18&-20&-4\\
0&0&0&-10&-18&-20&-4\\
5&10&10&0&-7&-5&4\\
9&18&18&7&0&5&10\\
10&20&20&5&-5&0&10\\
2&4&4&-4&-10&-10&0
\end{pmatrix}.
\]
The leading $3\times3$ block vanishes. Lemma \ref{lem:exterior_lin_dep} is the higher-order version of this observation.

We now quantify its effect on the weighted residual blades.

\begin{theorem}\label{thm:k_blade_inner_bound1}
Assume $\sigma_1=1$. Let $I=(i_1,\ldots,i_k)$ and $J=(j_1,\ldots,j_k)$, and suppose the Euclidean norms of the vectors
$u_\ell,u_{i_1},\ldots,u_{i_k},v_p,v_{j_1},\ldots,v_{j_k}$ are at most one. Let $M_1,M_2\geq k$ satisfy
\(P_{M_1}u_\ell\in\Span\{P_{M_1}u_{i_1},\ldots,P_{M_1}u_{i_k}\}\) and \(P_{M_2}v_p\in\Span\{P_{M_2}v_{j_1},\ldots,P_{M_2}v_{j_k}\}.\)
Then
\begin{equation}\label{eq:projected-dependence-bound}
\left|
\inner{u_I\wedge u_\ell,\,v_J\wedge v_p}_\sigma
\right|
\leq
\sqrt{\sigma_{M_1+1}\sigma_{M_2+1}}.
\end{equation}
\end{theorem}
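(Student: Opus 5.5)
The plan is to expand the blade inner product in coordinates with Lemma \ref{lem:k_blade_inner_form}. Then use Lemma \ref{lem:exterior_lin_dep} to kill every term supported in the leading coordinates. The surviving weights are bounded by tail singular values, and the remaining unweighted sum is controlled by Cauchy--Schwarz, Cauchy--Binet and Hadamard's inequality.

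Set $X=[u_{i_1}\ \cdots\ u_{i_k}\ u_\ell]$ and $Y=[v_{j_1}\ \cdots\ v_{j_k}\ v_p]$, both in $\R^{r\times(k+1)}$. If $k+1>r$, both blades vanish and the bound is trivial, so assume $k+1\le r$. By Lemma \ref{lem:k_blade_inner_form},
\[
\inner{u_I\wedge u_\ell,\,v_J\wedge v_p}_\sigma=\sum_{|K|=k+1}\sigma_K\det X(K,:)\det Y(K,:).
\]
The hypothesis $P_{M_1}u_\ell\in\Span\{P_{M_1}u_{i_a}\}$ makes the $k+1$ projected columns of $X$ linearly dependent. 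Lemma \ref{lem:exterior_lin_dep} then gives $\det X(K,:)=0$ for every $K\subseteq[M_1]$, and likewise $\det Y(K,:)=0$ for every $K\subseteq[M_2]$. Hence only multi-indices $K$ that contain some index $>M_1$ and some index $>M_2$ contribute.

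For such a $K$, every factor of $\sigma_K$ is at most $\sigma_1=1$, and one factor is at most $\sigma_{M_1+1}$. So $\sigma_K\le\sigma_{M_1+1}$, and symmetrically $\sigma_K\le\sigma_{M_2+1}$. Therefore $\sigma_K\le\sqrt{\sigma_{M_1+1}\sigma_{M_2+1}}$ on every surviving term, which gives
\[
\left|\inner{u_I\wedge u_\ell,\,v_J\wedge v_p}_\sigma\right|\le\sqrt{\sigma_{M_1+1}\sigma_{M_2+1}}\sum_{K}|\det X(K,:)|\,|\det Y(K,:)|.
\]
By Cauchy--Schwarz, the remaining sum is at most $\big(\sum_K\det X(K,:)^2\big)^{1/2}\big(\sum_K\det Y(K,:)^2\big)^{1/2}$. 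By the unweighted Cauchy--Binet formula, $\sum_K\det X(K,:)^2=\det(X^\top X)$. Hadamard's inequality bounds this Gram determinant by $\prod\|x_a\|_2^2\le 1$, since every column has Euclidean norm at most one. The same holds for $Y$, which yields \eqref{eq:projected-dependence-bound}.

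The main obstacle is mild: the two vanishing patterns come from different thresholds $M_1$ and $M_2$. I handle this by bounding $\sigma_K$ termwise by the geometric mean of the two tail bounds, rather than splitting the weight across the two Cauchy--Schwarz factors. The unit-norm and $\sigma_1=1$ normalizations are exactly what let the Hadamard and weight bounds close without extra constants.
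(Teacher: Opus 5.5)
Your proof is correct. It uses the same ingredients as the paper's proof: the coordinate expansion, Lemma~\ref{lem:exterior_lin_dep}, the tail bound on $\sigma_K$, Cauchy--Schwarz and Hadamard. The difference is the order in which you assemble them, and the order matters.

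The paper applies Cauchy--Schwarz in the \emph{weighted} space first, $|\inner{u_I\wedge u_\ell,v_J\wedge v_p}_\sigma|\le\|u_I\wedge u_\ell\|_\sigma\|v_J\wedge v_p\|_\sigma$. It then bounds each weighted blade norm on its own, $\|u_I\wedge u_\ell\|_\sigma^2\le\sigma_{M_1+1}\|u_I\wedge u_\ell\|_2^2\le\sigma_{M_1+1}$, and similarly with $M_2$. The geometric mean is just the product of these two one-sided estimates.

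You instead restrict to the common support where both minors are nonzero, bound the weights there, and only then apply \emph{unweighted} Cauchy--Schwarz. This proves more than you claim. A surviving $K$ lies in neither $[M_1]$ nor $[M_2]$, so it contains an index exceeding $\max(M_1,M_2)$. That gives $\sigma_K\le\min(\sigma_{M_1+1},\sigma_{M_2+1})$, and hence $|\inner{u_I\wedge u_\ell,v_J\wedge v_p}_\sigma|\le\sigma_{\max(M_1,M_2)+1}$.

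This sharper bound needs the dependence hypothesis only on the side with the larger $M$. For instance, with $M_1=k$ (which holds automatically) and $M_2>k$, you get $\sigma_{M_2+1}$ rather than $\sqrt{\sigma_{k+1}\sigma_{M_2+1}}$. Relaxing to the geometric mean throws this away.

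What the paper's ordering gives in return is a separate bound on the weighted norm of each residual blade. These bounds are meaningful by themselves. They also match the paper's reading of the estimate as discarding only the weighted angle between the two blades.
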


\begin{proof}
By Cauchy--Schwarz in $\bigwedge^{k+1}\R^r$,
\[
\left|\inner{u_I\wedge u_\ell,v_J\wedge v_p}_\sigma\right|
\leq
\|u_I\wedge u_\ell\|_\sigma
\|v_J\wedge v_p\|_\sigma.
\]
We bound the first factor; the second is identical. Let
$X=[u_{i_1}\ \cdots\ u_{i_k}\ u_\ell]$. By Lemma \ref{lem:exterior_lin_dep},
$\det X(K,:)=0$ whenever $K\subseteq[M_1]$ and $|K|=k+1$. Hence Lemma \ref{lem:k_blade_inner_form} gives
\[
\|u_I\wedge u_\ell\|_\sigma^2
=
\sum_{\substack{|K|=k+1,K\nsubseteq[M_1]}}
\sigma_K\det X(K,:)^2.
\]
Every index set in this sum contains a coordinate larger than $M_1$. Since $\sigma_1=1$ and the singular values are nonincreasing,
$\sigma_K\leq\sigma_{M_1+1}$. Therefore
\[
\|u_I\wedge u_\ell\|_\sigma^2
\leq
\sigma_{M_1+1}
\sum_{|K|=k+1}\det X(K,:)^2
=
\sigma_{M_1+1}\|u_I\wedge u_\ell\|_2^2.
\]
The unweighted blade norm is the volume of the parallelotope generated by the columns of $X$. Hadamard's inequality and the assumed vector-norm bounds imply
$\|u_I\wedge u_\ell\|_2\leq1$. Thus
$\|u_I\wedge u_\ell\|_\sigma^2\leq\sigma_{M_1+1}$.
\end{proof}

Rows of the thin singular-vector matrices automatically satisfy the Euclidean norm hypothesis because $U$ and $V$ have orthonormal columns. The proof also shows the unnormalized version
\[
\left|
\inner{u_I\wedge u_\ell,\,v_J\wedge v_p}_\sigma
\right|
\leq
\sigma_1^k\sqrt{\sigma_{M_1+1}\sigma_{M_2+1}}.
\]
The bound deliberately discards the angle between the two residual blades. It is therefore conservative, but it identifies the spectral scale that remains after leading-coordinate dependence removes the dominant blade components. In practice, the angle between $k$-blades will tend to be small when the singular vectors decay rapidly (see section \ref{sec:k_blade_ang}), so this approximation is often not a significant sacrifice. 

Taking $M_1=M_2=k$ yields a familiar consequence. Any $k+1$ vectors in $\R^k$ are linearly dependent, so the hypotheses are automatic.

\begin{corollary} \label{corr:sig_k_1_bound}
Let $S=A(I,J)$ be a nonsingular $k\times k$ pivot block. If $\sigma_1=1$, then
\[
\big\|A-A(:,J)S^{-1}A(I,:)\big\|_{\max}
\leq
\frac{\sigma_{k+1}}{|\det S|}.
\]
\end{corollary}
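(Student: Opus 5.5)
The plan is to combine the raw residual formula \eqref{eq:raw-blade-residual} of Theorem \ref{thm:basic_res_form} with the projected-dependence estimate of Theorem \ref{thm:k_blade_inner_bound1}, taking $M_1=M_2=k$. Fix an arbitrary entry $(\ell,p)\in[n]\times[m]$. By \eqref{eq:raw-blade-residual},
\[
E_{\ell p}=\frac{\inner{u_I\wedge u_\ell,\,v_J\wedge v_p}_\sigma}{\det S},
\]
so it suffices to bound the numerator by $\sigma_{k+1}$ uniformly in $\ell,p$. Taking the maximum over all entries then gives the claim.

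Next I would check the hypotheses of Theorem \ref{thm:k_blade_inner_bound1}. We assume $\sigma_1=1$. The vectors $u_\ell,u_{i_a},v_p,v_{j_b}$ are rows of the thin singular-vector matrices $U$ and $V$. Since these matrices have orthonormal columns, $UU^\top$ is an orthogonal projector, and its diagonal entries $\|u_i\|_2^2$ are therefore at most one; the same holds for $V$.

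For the span conditions with $M_1=M_2=k$, I would argue as follows. Nonsingularity of $S=A(I,J)=U(I,:)\Sigma V(J,:)^\top$ forces $U(I,:)$ to have rank $k$, hence $r\geq k$. Restricted to the first $k$ columns, $S$ still factors through an $r$-dimensional space. The relevant fact is that $S$ nonsingular makes $P_ku_{i_1},\dots,P_ku_{i_k}$ span $\R^k$, provided one chooses the appropriate representation. If that spanning fails, $\{P_ku_{i_a}\}\cup\{P_ku_\ell\}$ is still a set of $k+1$ vectors in $\R^k$ and hence dependent. The proof of Theorem \ref{thm:k_blade_inner_bound1} only uses this dependence, through Lemma \ref{lem:exterior_lin_dep}, and never the span statement itself. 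So I would invoke the proof directly: every $(k+1)$-minor $\det X(K,:)$ with $K\subseteq[k]$ is vacuous, because no such $K$ exists. Every $K$ with $|K|=k+1$ therefore contains an index exceeding $k$, which gives $\sigma_K\leq\sigma_{k+1}$ and $\|u_I\wedge u_\ell\|_\sigma^2\leq\sigma_{k+1}$ via Hadamard, and likewise for $v$.

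Cauchy--Schwarz then bounds the numerator by $\sqrt{\sigma_{k+1}\sigma_{k+1}}=\sigma_{k+1}$, and dividing by $|\det S|$ finishes the argument. The only delicate point is this literal span hypothesis. I expect to sidestep it by noting that the bound in Theorem \ref{thm:k_blade_inner_bound1} depends only on the vanishing of minors indexed inside $[M]$, which is trivial when $M=k<k+1$. The degenerate case $r<k+1$ also needs a word: there all $(k+1)$-minors vanish, so $E=0$ and the bound holds trivially.
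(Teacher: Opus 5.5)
Your proof is correct and follows the paper's route: it applies Theorem \ref{thm:k_blade_inner_bound1} with $M_1=M_2=k$ to the numerator of \eqref{eq:raw-blade-residual} and then divides by $|\det S|$. Your observation that the literal span hypothesis can fail is a fair sharpening of the paper's claim that the hypotheses are ``automatic''. The proof uses only the vanishing of $(k+1)$-minors indexed inside $[k]$, which is vacuous, so it goes through. Your aside that nonsingularity of $S$ forces $P_ku_{i_1},\dots,P_ku_{i_k}$ to span $\R^k$ is false in general; for example, $A=\operatorname{diag}(1,\tfrac12)$ with pivot $(2,2)$ is a counterexample. You do not rely on that aside, so the argument stands.
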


\begin{proof}
Apply Theorem \ref{thm:k_blade_inner_bound1} with $M_1=M_2=k$ to the numerator in \eqref{eq:raw-blade-residual}, bounding every element of the residual, and thus the max norm.
\end{proof}

Note that the unnormalized bound will have a factor of $\sigma_1^k$ in the numerator, but normalizing $\sigma_1\to 1$ is equivalent to diving the denominator determinant by $\sigma_1^k$. Thus, the CUR residual will be the same as in the normalized case. 

This estimate can also be obtained from compound-matrix or singular-value interlacing arguments. The exterior formulation adds the more refined information that $M_1$ and $M_2$ may be larger than $k$. When the leading coordinates remain dependent in a higher-dimensional subspace, the relevant tail singular values can be much smaller than $\sigma_{k+1}$.

The limiting case gives exact annihilation.

\begin{corollary} \label{corr:zero_M_sing_val}
Let $S=A(I,J)$ be nonsingular. Suppose $M\geq k$, $\sigma_{M+1}=0$, and
\[
P_Mu_\ell\in
\Span\{P_Mu_{i_1},\ldots,P_Mu_{i_k}\}.
\]
Then \(
\big(A-A(:,J)S^{-1}A(I,:)\big)_{\ell p}=0\) for every \(p\). 
The analogous condition on $v_p$ annihilates the entire $p$th residual column.
\end{corollary}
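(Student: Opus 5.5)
The plan is to combine the raw residual formula \eqref{eq:raw-blade-residual} of Theorem~\ref{thm:basic_res_form} with the coordinate expansion of Lemma~\ref{lem:k_blade_inner_form}. Since $S$ is nonsingular, $\det S\neq 0$. For every $p$, the entry $E_{\ell p}$ therefore equals $\inner{u_I\wedge u_\ell,\,v_J\wedge v_p}_\sigma/\det S$. It suffices to show that this numerator vanishes. Set $X=[u_{i_1}\ \cdots\ u_{i_k}\ u_\ell]$ and $Y=[v_{j_1}\ \cdots\ v_{j_k}\ v_p]$. Lemma~\ref{lem:k_blade_inner_form} then writes the numerator as $\sum_{|K|=k+1}\sigma_K\det X(K,:)\det Y(K,:)$.

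The sum splits according to whether $K\subseteq[M]$ or not.
\begin{itemize}
\item If $K\subseteq[M]$, the hypothesis says that $P_Mu_\ell$ lies in the span of $P_Mu_{i_1},\ldots,P_Mu_{i_k}$. Hence the $k+1$ projected vectors $P_Mu_{i_1},\ldots,P_Mu_{i_k},P_Mu_\ell$ are linearly dependent, and Lemma~\ref{lem:exterior_lin_dep} gives $\det X(K,:)=0$.
\item If $K\nsubseteq[M]$, then $K$ contains some index $q\geq M+1$. By monotonicity of the singular values, $\sigma_q\leq\sigma_{M+1}=0$, so $\sigma_K=0$.
\end{itemize}
Every term vanishes, so the numerator is zero and $E_{\ell p}=0$ for every $p$.

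One technical point needs attention. The paper's setup takes the thin SVD with $\sigma_r>0$, so $\sigma_{M+1}=0$ should be read as $M\geq r$, with $\sigma_q:=0$ for $q>r$. In that reading the second case is vacuous: there are no coordinates beyond $r$, and the statement reduces to $P_M=\mathrm{id}$ on $\R^r$. Alternatively, one may work with a full (non-thin) SVD padded by zero singular values. The expansion in Lemma~\ref{lem:k_blade_inner_form} still holds because $A_{ij}=\inner{u_i,v_j}_\sigma$ remains valid for the padded rows. Either way the argument above goes through unchanged. Note that no normalization $\sigma_1=1$ is needed, because we only show vanishing.

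For the column statement, the argument is symmetric. Assume $P_Mv_p\in\Span\{P_Mv_{j_1},\ldots,P_Mv_{j_k}\}$. Then $\det Y(K,:)=0$ for all $K\subseteq[M]$, while $\sigma_K=0$ for all other $K$. So the numerator vanishes for every $\ell$, annihilating the $p$th residual column.

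I expect no real obstacle here. The only care needed is the interpretation of $\sigma_{M+1}=0$ relative to the thin SVD convention, as discussed above.
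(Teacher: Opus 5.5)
Your proposal is correct and matches the paper's own proof. Both write $E_{\ell p}$ via \eqref{eq:raw-blade-residual}, expand the numerator by Lemma~\ref{lem:k_blade_inner_form}, and show each term vanishes, either because $\sigma_K=0$ when $K\nsubseteq[M]$ or because $\det X(K,:)=0$ by Lemma~\ref{lem:exterior_lin_dep} when $K\subseteq[M]$. Your extra remark is a useful clarification the paper leaves implicit: $\sigma_{M+1}=0$ forces $M\geq r$, so the statement only has real content for a zero-padded SVD, which is what the paper's own $4\times4$ example uses.
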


\begin{proof}
Because $\sigma_{M+1}=0$, all nonzero exterior weights are supported in the first $M$ coordinates. Lemma \ref{lem:exterior_lin_dep} makes every such coordinate of $u_I\wedge u_\ell$ vanish, so the numerator of \eqref{eq:raw-blade-residual} is zero.
\end{proof}

\paragraph{Example.}
As an example of the preceding corollary, consider the matrix
\(
    A=U\Sigma V^\top,
\)
where $U=V=Q$,
\[
Q=
\frac{1}{2}
\begin{pmatrix}
1 & 1 & 1 & 1\\
1 & -1 & 1 & -1\\
1 & 1 & -1 & -1\\
1 & -1 & -1 & 1
\end{pmatrix},
\qquad
\Sigma=
\begin{pmatrix}
1 & 0 & 0 & 0\\
0 & \frac12 & 0 & 0\\
0 & 0 & 0 & 0\\
0 & 0 & 0 & 0
\end{pmatrix}.
\]
Since $Q$ is orthogonal and $\Sigma$ has nonnegative diagonal
entries in nonincreasing order, this is a singular value
decomposition of $A$. Direct calculation gives
\[
A=
\frac{1}{8}\begin{pmatrix}
3 & 1 & 3 & 1\\
1 & 3 & 1 & 3\\
3 & 1 & 3 & 1\\
1 & 3 & 1 & 3
\end{pmatrix}.
\]
Select the pivot $(i,j)=(1,3)$. Since $A_{1,3}=3/8$, the
corresponding rank-one residual is
\[
A-A(:,3)A_{1,3}^{-1}A(1,:)
=
\begin{pmatrix}
0 & 0 & 0 & 0\\
0 & \frac13 & 0 & \frac13\\
0 & 0 & 0 & 0\\
0 & \frac13 & 0 & \frac13
\end{pmatrix}.
\]
Thus, in addition to the selected first row and third column, the
third row and first column are also annihilated.

This behavior follows directly from
Corollary~\ref{corr:zero_M_sing_val}. Let $P_2$ denote projection
onto the first two coordinates. If $u_i$ and $v_j$ denote the rows
of $U$ and $V$, respectively, then \(P_2u_3=P_2u_1,  P_2v_1=P_2v_3. \)
Hence,
\[
    P_2u_3\in\operatorname{span}\{P_2u_1\},
    \qquad
    P_2v_1\in\operatorname{span}\{P_2v_3\}.
\]
Because $\sigma_3=0$, selecting the pivot $(1,3)$ therefore also
annihilates the third row and first column.

\subsection{The normalized pivot determinant}\label{sec:denom_det}

Next, we analyze the denominator of \eqref{eq:normalized-blade-residual}. The matrix
\[
\bar S_{ab}=\cos\angle_\sigma(u_{i_a},v_{j_b})
\]
is a cross-Gram matrix of weighted-unit vectors. Its determinant measures the mutual alignment of the two selected $k$-frames under the geometry imposed by the $\sigma$-weighted inner product.

A universal upper bound follows immediately from the exterior interpretation.

\begin{lemma} \label{lem:simple_denom_det_bound}
For every normalized pivot block $\bar S$,
\(
|\det\bar S|\leq1.
\)
\end{lemma}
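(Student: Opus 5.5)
The plan is to read $\det\bar S$ as a $k$-blade inner product and apply Cauchy--Schwarz in $\bigwedge^k\R^r$, followed by Hadamard's inequality in the weighted geometry. By the definition of the induced inner product on blades,
\[
\det\bar S=\det\big[\inner{\widehat u_{i_a},\widehat v_{j_b}}_\sigma\big]_{a,b=1}^k=\inner{\widehat u_I,\widehat v_J}_\sigma ,
\]
which is the remark made after Theorem~\ref{thm:basic_res_form}.

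The first step is to check that $\inner{\cdot,\cdot}_\sigma$ is a genuine inner product on $\bigwedge^k\R^r$. Since all $\sigma_q>0$ on the thin SVD, Lemma~\ref{lem:k_blade_inner_form} expresses it as the diagonal form $\sum_K\sigma_K X_KY_K$ in the basis $e_K$, and this is positive definite. Cauchy--Schwarz then gives
\[
|\det\bar S|\leq\|\widehat u_I\|_\sigma\|\widehat v_J\|_\sigma .
\]

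The second step is to bound each factor by one. We have
\[
\|\widehat u_I\|_\sigma^2=\det G,\qquad G_{ab}=\inner{\widehat u_{i_a},\widehat u_{i_b}}_\sigma ,
\]
and $G$ is a Gram matrix for the weighted inner product on $\R^r$, so it is positive semidefinite with unit diagonal. Hadamard's inequality for positive semidefinite matrices gives $\det G\leq\prod_a G_{aa}=1$. The same argument applies to $\widehat v_J$, and combining the two bounds yields $|\det\bar S|\leq1$.

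No step here is a real obstacle. The only care required is to use Hadamard in the weighted geometry rather than the Euclidean one. One can do this directly, because the Gram matrix is PSD with unit diagonal. Alternatively, one can rescale by $\Sigma^{1/2}$, replacing $u$ by $\Sigma^{1/2}u$, which reduces the weighted norms to Euclidean ones and lets the Euclidean Hadamard inequality apply. One also relies on the standing assumption from Theorem~\ref{thm:basic_res_form} that the weighted norms are nonzero, so that $\bar S$ is defined.
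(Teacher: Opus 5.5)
Your proposal is correct and follows the paper's proof essentially verbatim: the paper also writes $\det\bar S=\inner{\widehat u_I,\widehat v_J}_\sigma$, applies Cauchy--Schwarz, and bounds each blade norm by the product of the weighted norms of its unit factors, which is exactly your Hadamard step on the weighted Gram matrix. Your explicit checks (positive definiteness from $\sigma_q>0$, and applying Hadamard in the weighted geometry) make precise what the paper leaves implicit.
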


\begin{proof}
Let $\widehat u_I$ and $\widehat v_J$ denote the blades generated by the weighted-unit selected rows. Then
\(
\det\bar S=\inner{\widehat u_I,\widehat v_J}_\sigma.
\)
Cauchy--Schwarz gives
\[
|\det\bar S|
\leq
\|\widehat u_I\|_\sigma\|\widehat v_J\|_\sigma.
\]
Each blade norm is the volume spanned by unit vectors and is at most the product of their norms, hence at most one.
\end{proof}

The upper bound is attained when the two selected weighted frames are orthonormal (under the $\sigma$ inner product) and coincide up to an orthogonal change of orientation. Basis-aligned coherent singular-vector rows provide one transparent example, but equality is not unique to that case.

The next result gives a concrete, if deliberately strong, sufficient condition for the normalized determinant to be bounded away from zero. It formalizes the situation in which each selected row and column is concentrated on a distinct singular coordinate.

\begin{lemma}\label{lem:denom_lower_bound}
Let $\widehat u_1,\ldots,\widehat u_k$ and
$\widehat v_1,\ldots,\widehat v_k$ be vectors in $\R^r$ satisfying
\(
    \|\widehat u_a\|_\sigma
    =
    \|\widehat v_a\|_\sigma
    =
    1,
     a=1,\ldots,k,
\)
and define the normalized cross-Gram matrix
\(
    \bar S_{ab}
    =
    \inner{\widehat u_a,\widehat v_b}_\sigma
    =
    \sum_{q=1}^r
    \sigma_q(\widehat u_a)_q(\widehat v_b)_q.
\)
Let $\mu,\nu\in[0,1]$, and suppose that there are distinct coordinates
$q_1,\ldots,q_k\in[r]$ such that, for every $a$,
\[
    \sigma_{q_a}(\widehat u_a)_{q_a}^2
    \geq \mu,
    \qquad
    \sigma_{q_a}(\widehat v_a)_{q_a}^2
    \geq \nu.
\]
Suppose also that, for every $q\neq q_a$,
\(\sigma_q(\widehat u_a)_q^2  \leq  \frac{1-\mu}{r-1},    \sigma_q(\widehat v_a)_q^2 \leq\frac{1-\nu}{r-1}.\)
Define
\[
    d(\mu,\nu)
    =
    \sqrt{\mu\nu}
    -
    \sqrt{(1-\mu)(1-\nu)}
\text{~and~}
    b(\mu,\nu,r)
    =
    \sqrt{\frac{1-\nu}{r-1}}
    +
    \sqrt{\frac{1-\mu}{r-1}} +
    \frac{r-2}{r-1}
    \sqrt{(1-\mu)(1-\nu)}.
\]
Finally, let
\(
    \delta
    =
    d(\mu,\nu)
    -
    (k-1)b(\mu,\nu,r).
\)
If $\delta>0$, then \(|\det\bar S|\geq\delta^k.\)
\end{lemma}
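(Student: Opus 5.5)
The plan is to show that $\bar S$ is strictly diagonally dominant by rows, with every row having dominance margin at least $\delta$, and then to use a standard lower bound on the determinant of such matrices. There is one sign normalization. Flipping the sign of $\widehat v_a$ changes only the sign of $\det\bar S$ and leaves every hypothesis intact. So we may assume $(\widehat u_a)_{q_a}(\widehat v_a)_{q_a}\geq 0$ for every $a$.

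\emph{Diagonal entries.} The term at $q_a$ in $\bar S_{aa}$ equals $\sqrt{\sigma_{q_a}(\widehat u_a)_{q_a}^2}\sqrt{\sigma_{q_a}(\widehat v_a)_{q_a}^2}\geq\sqrt{\mu\nu}$. For the remaining terms, apply Cauchy--Schwarz to the vectors $(\sqrt{\sigma_q}(\widehat u_a)_q)_{q\neq q_a}$ and $(\sqrt{\sigma_q}(\widehat v_a)_q)_{q\neq q_a}$. Since $\|\widehat u_a\|_\sigma=1$, the first has squared norm $1-\sigma_{q_a}(\widehat u_a)_{q_a}^2\leq 1-\mu$, and likewise the second is at most $1-\nu$. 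Hence the off-coordinate sum is at most $\sqrt{(1-\mu)(1-\nu)}$ in absolute value, and therefore $\bar S_{aa}\geq d(\mu,\nu)$.

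\emph{Off-diagonal entries.} Fix $b\neq a$ and split $\bar S_{ab}=\sum_q\sigma_q(\widehat u_a)_q(\widehat v_b)_q$ into three groups.
\begin{itemize}
\item At $q=q_a$: we have $\sigma_{q_a}(\widehat u_a)_{q_a}^2\leq1$, and since $q_a\neq q_b$ we also have $\sigma_{q_a}(\widehat v_b)_{q_a}^2\leq(1-\nu)/(r-1)$. The term is therefore at most $\sqrt{(1-\nu)/(r-1)}$.
\item At $q=q_b$: symmetrically, the term is at most $\sqrt{(1-\mu)/(r-1)}$.
\item At each of the other $r-2$ coordinates: both pointwise bounds apply, so each term is at most $\sqrt{(1-\mu)(1-\nu)}/(r-1)$.
\end{itemize}
Summing the three groups gives $|\bar S_{ab}|\leq b(\mu,\nu,r)$. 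Consequently every row satisfies $|\bar S_{aa}|-\sum_{b\neq a}|\bar S_{ab}|\geq d-(k-1)b=\delta>0$.

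\emph{Determinant bound.} We prove by induction on $k$ that a real matrix whose row margins $m_a=|s_{aa}|-\sum_{b\neq a}|s_{ab}|$ all exceed $\delta>0$ has $|\det|\geq\delta^k$. Eliminate the first pivot, which gives $\det\bar S=\bar S_{11}\det T$ with $T_{ij}=\bar S_{ij}-\bar S_{i1}\bar S_{1j}/\bar S_{11}$. Here $|\bar S_{11}|\geq m_1\geq\delta$. For each row $i\geq2$ of $T$, the triangle inequality gives
\[
|T_{ii}|-\sum_{j\neq i,\,j\geq2}|T_{ij}|
\geq |\bar S_{ii}|-\sum_{j\neq i,\,j\geq2}|\bar S_{ij}|-\frac{|\bar S_{i1}|}{|\bar S_{11}|}\sum_{j\geq2}|\bar S_{1j}|.
\]
Row dominance of the first row gives $\sum_{j\geq2}|\bar S_{1j}|\leq|\bar S_{11}|$, so the last term is at most $|\bar S_{i1}|$ and the right-hand side is at least $m_i\geq\delta$. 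The induction hypothesis then applies to $T$ and yields $|\det\bar S|\geq\delta\cdot\delta^{k-1}$.

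The main obstacle is the sign of the diagonal contribution. Without the alignment normalization, $d$ could not bound $\bar S_{aa}$ from below. I would therefore state the sign-flip reduction explicitly at the start, noting that it affects only the sign of $\det\bar S$. A secondary point is being careful that the $q_a$ and $q_b$ terms use only the cross bounds, since $q_a\neq q_b$.
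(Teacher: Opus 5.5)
Your proof is correct, and it differs from the paper's only in the final step. Your entrywise bounds match the paper's: $|\bar S_{aa}|\geq d(\mu,\nu)$ via Cauchy--Schwarz off the distinguished coordinate, and $|\bar S_{ab}|\leq b(\mu,\nu,r)$ via the split into $q_a$, $q_b$ and the remaining $r-2$ coordinates.

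\textbf{The determinant step.} The paper multiplies by $D=\operatorname{diag}(\operatorname{sgn}\bar S_{aa})$ and applies the Gershgorin disk theorem to $D\bar S$. This gives $|\lambda|\geq\delta$ for every (possibly complex) eigenvalue, and then $|\det\bar S|=\prod_a|\lambda_a|\geq\delta^k$. You instead prove the bound directly. One step of Gaussian elimination shows the Schur complement keeps row margins at least $m_i$, and you induct on $k$.
\begin{itemize}
\item Gershgorin is shorter if taken as known. It also localizes the spectrum: every eigenvalue of $D\bar S$ has real part at least $\delta$.
\item Your argument is self-contained and stays in real arithmetic.
\item The same induction gives the sharper Ostrowski-type bound $|\det\bar S|\geq\prod_a m_a$, using each row's own margin rather than $(\min_a m_a)^k$.
\item It also fits the paper's theme, since it shows that elimination on the pivot block (exactly the operation ACA performs) preserves diagonal dominance.
\end{itemize}

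\textbf{The sign normalization.} Your closing remark overstates its role: it is not an obstacle and is not needed. The reverse triangle inequality gives $|\bar S_{aa}|\geq\sqrt{\mu\nu}-\sqrt{(1-\mu)(1-\nu)}$ with no alignment assumption, and this is what the paper does. Your determinant lemma involves only $|\bar S_{aa}|$, so flipping the columns is harmless but superfluous. The paper's $D$ is likewise cosmetic for the Gershgorin step.
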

\begin{proof}
Set $x_a=\Sigma^{1/2}\widehat u_a$ and
$y_a=\Sigma^{1/2}\widehat v_a$. Then
$\|x_a\|_2=\|y_a\|_2=1$ and
$\bar S_{ab}=x_a^\top y_b$.

For a diagonal entry, the reverse triangle inequality and
Cauchy--Schwarz give
\[
\begin{aligned}
|\bar S_{aa}|
&\geq
|x_{a,q_a}y_{a,q_a}|
-
\left|
\sum_{q\neq q_a}x_{a,q}y_{a,q}
\right| \\
&\geq
\sqrt{\mu\nu}
-
\left(\sum_{q\neq q_a}x_{a,q}^2\right)^{1/2}
\left(\sum_{q\neq q_a}y_{a,q}^2\right)^{1/2} \geq
\sqrt{\mu\nu}
-
\sqrt{(1-\mu)(1-\nu)}
=
d(\mu,\nu).
\end{aligned}
\]

Now let $a\neq b$. Since $q_a\neq q_b$, the coordinate $q_a$ is
nondistinguished for $y_b$, the coordinate $q_b$ is nondistinguished
for $x_a$, and every other coordinate is nondistinguished for both.
Using $\|x_a\|_2=\|y_b\|_2=1$ and the coordinatewise assumptions,
\[
\begin{aligned}
|\bar S_{ab}|
&\leq
|x_{a,q_a}y_{b,q_a}|
+
|x_{a,q_b}y_{b,q_b}|
+
\sum_{q\notin\{q_a,q_b\}}|x_{a,q}y_{b,q}| \\
&\leq
\sqrt{\frac{1-\nu}{r-1}}
+
\sqrt{\frac{1-\mu}{r-1}}
+
\frac{r-2}{r-1}\sqrt{(1-\mu)(1-\nu)} =
b(\mu,\nu,r).
\end{aligned}
\]

Since $\delta>0$, we have $d(\mu,\nu)>0$, so every diagonal entry of
$\bar S$ is nonzero. Let
\[
D=
\operatorname{diag}
\bigl(
\operatorname{sgn}(\bar S_{11}),\ldots,
\operatorname{sgn}(\bar S_{kk})
\bigr),
\qquad
B=D\bar S.
\]
Then $|\det B|=|\det\bar S|$, while
$B_{aa}=|\bar S_{aa}|\geq d(\mu,\nu)$ and
\(
\sum_{b\neq a}|B_{ab}|
\leq
(k-1)b(\mu,\nu,r).
\)
Hence, by the Gershgorin disk theorem, every eigenvalue $\lambda$ of
$B$ satisfies
\[
\lambda
\geq
d(\mu,\nu)-(k-1)b(\mu,\nu,r)
=
\delta,
\]
and therefore $|\lambda|\geq\delta$. It follows that
\[
|\det\bar S|
=
|\det B|
=
\prod_{a=1}^k|\lambda_a|
\geq
\delta^k.
\]
\end{proof}

The concentration assumptions in Lemma \ref{lem:denom_lower_bound} are
restrictive, so the result is best interpreted as a geometric certificate for
the normalized determinant. Figure \ref{fig:det_bound} shows the symmetric
specialization $\mu=\nu$ for $r=1000$. As $k$ increases, a stronger
concentration is required for the bound to become positive because each
Gershgorin radius contains $k-1$ off-diagonal contributions. Once this
threshold is exceeded, the bound increases rapidly toward one.

\begin{figure}[ht]
    \centering
\includegraphics[width=0.55\linewidth]{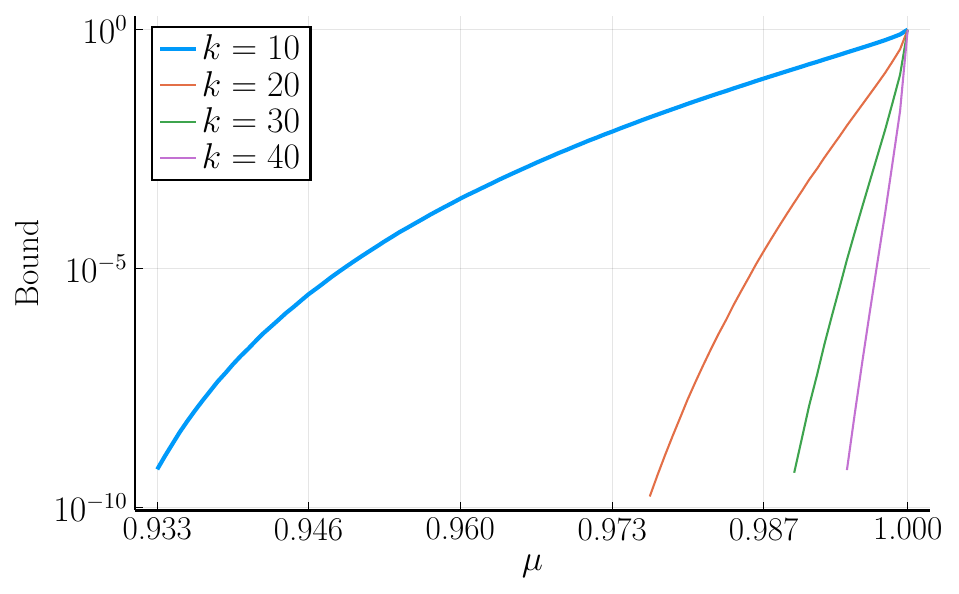}
    \caption{The determinant lower bound $\delta^k$ from Lemma \ref{lem:denom_lower_bound} along the symmetric slice  $\mu=\nu$, for $r=1000$ and several values of $k$.}
    \label{fig:det_bound}
\end{figure}

Normalization does not eliminate the effect of the weighted row magnitudes.
Equation \eqref{eq:normalized-blade-residual} still contains
$\|u_\ell\|_\sigma$ and $\|v_p\|_\sigma$, while a small weighted norm of a
selected row can amplify its normalized tail coordinates. A practical pivot
rule must therefore balance angular geometry, row magnitudes, and the
conditioning of the normalized pivot block.

A complementary question is which singular-vector geometries minimize the
largest attainable normalized determinant. The rank-one analysis in
Appendix \ref{apx:cosine_vs_coherence} suggests highly incoherent
configurations as natural candidates; the corresponding problem for higher
exterior powers remains open, but we believe the answer will similarly be highly incoherent singular vectors. 

\subsection{Positive-semidefinite matrices}\label{sec:psd}

Let
\(
A=U\Sigma U^\top,
\)
where $U\in\R^{n\times n}$ is orthogonal and $\Sigma$ is diagonal and nonnegative. If $u_i$ denotes the $i$th row of $U$, then
$A_{ij}=\inner{u_i,u_j}_\sigma$. For a diagonal pivot $(i,i)$, the singular vector rows are guaranteed to be well aligned. The normalized cosine we will see in Theorem \ref{thm:single_iter_cos_bound} is one. Moreover,
\[
A-A(:,i)A_{ii}^{-1}A(i,:)
\]
is positive semidefinite whenever $A_{ii}>0$.

For positive-semidefinite matrices, diagonal pivoting is natural and is optimal for several determinant-based criteria  \cite{Cortinovis_MaximumVolumeSubmatrices2019}. This does not imply that a diagonal rank-one cross update minimizes every residual norm over all possible row--column pairs. The following small example illustrates the distinction. Let
\[
H=
\begin{pmatrix}
1&1&-1&-1\\
1&-1&-1&1\\
1&1&1&1\\
1&-1&1&-1
\end{pmatrix},
\qquad
\Lambda=\operatorname{diag}(1,\epsilon,\epsilon^2,\epsilon^4),
\qquad
A=H^\top\Lambda H.
\]
For every $\epsilon>0$, $A$ is positive definite. By the symmetry of the Hadamard construction, all four diagonal pivots give the same Frobenius residual. At $\epsilon=2.9$,
\[
\left\|A-A(:,i)A_{ii}^{-1}A(i,:)\right\|_F
\approx68.393
\qquad\text{for every }i,
\]
whereas the off-diagonal pivot $(1,3)$ gives
\(
\left\|A-A(:,3)A_{13}^{-1}A(1,:)\right\|_F
\approx60.100.
\)
Thus diagonal pivoting can remain preferable for structure preservation and stability without being the unconstrained Frobenius-optimal rank-one cross update.

We next give a sufficient condition under which a specified diagonal pivot is no worse than an off-diagonal competitor.

\begin{theorem} \label{thm:diag_pivoting_spd}
Let $A=U\Sigma U^\top$, where $U$ is orthogonal and
$0\preceq\Sigma\preceq \mathbb{I}$. Fix indices $i,j$ such that
$A_{ii}> 0$ and $A_{ij}\neq0$, and write
\[x=u_i^\top\Sigma u_j,\qquad b=u_i^\top\Sigma^3u_j, \qquad d_i=u_i^\top\Sigma^2u_i, \qquad d_j=u_j^\top\Sigma^2u_j.\]
Suppose
\( |b|\leq\epsilon_3, d_id_j=\epsilon_2^2,\)
and
\begin{equation}\label{eq:diag-pivot-sufficient-interval}
|x|\leq\sqrt{\epsilon_3^2+\epsilon_2^2}-\epsilon_3.
\end{equation}
Then
\(\left\|A-A(:,i)A(i,i)^{-1}A(i,:)\right\|_F \leq\left\|A-A(:,j)A(i,j)^{-1}A(i,:)\right\|_F.
\)
\end{theorem}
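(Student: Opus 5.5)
The plan is to compute both Frobenius residuals in closed form through the rank-one identity, and then compare them term by term. For a pivot $(i,j)$ with $s=A_{ij}\neq0$, set $c=A(:,j)$ and $r=A(i,:)$. Expanding the square gives
\[
\|A-cs^{-1}r\|_F^2=\|A\|_F^2-\frac{2\,r A^\top c}{s}+\frac{\|c\|_2^2\|r\|_2^2}{s^2}.
\]
Since $A=U\Sigma U^\top$ with $U$ orthogonal, we have $A^\top A=U\Sigma^2U^\top$ and $AA^\top A=U\Sigma^3U^\top$. Hence $rA^\top c=u_i^\top\Sigma^3u_j=b$, $\|c\|_2^2=u_j^\top\Sigma^2u_j=d_j$ and $\|r\|_2^2=d_i$. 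For the off-diagonal competitor this gives
\[
\|A-A(:,j)A_{ij}^{-1}A(i,:)\|_F^2=\|A\|_F^2-\frac{2b}{x}+\frac{\epsilon_2^2}{x^2}.
\]
For the diagonal pivot, write $a=A_{ii}=u_i^\top\Sigma u_i>0$ and $b_i=u_i^\top\Sigma^3u_i$. The same identity then gives $\|A\|_F^2-2b_i/a+d_i^2/a^2$.

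Next I bound the diagonal correction. Apply Cauchy--Schwarz to the vectors $\Sigma^{1/2}u_i$ and $\Sigma^{3/2}u_i$:
\[
d_i^2=\big(u_i^\top\Sigma^{1/2}\Sigma^{3/2}u_i\big)^2\le (u_i^\top\Sigma u_i)(u_i^\top\Sigma^3u_i)=a\,b_i .
\]
Therefore $d_i^2/a^2\le b_i/a$, so the diagonal correction satisfies $-2b_i/a+d_i^2/a^2\le -b_i/a\le0$. Geometrically this is the PSD statement that a diagonal pivot never increases the Frobenius norm. It then suffices to show that the off-diagonal correction is nonnegative, that is,
\[
\frac{\epsilon_2^2}{x^2}-\frac{2b}{x}\ge0\iff \epsilon_2^2\ge 2bx .
\]

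The remaining step is to extract this inequality from \eqref{eq:diag-pivot-sufficient-interval}. Using $|b|\le\epsilon_3$, it is enough to have $\epsilon_2^2\ge 2\epsilon_3|x|$. Rationalizing the hypothesis gives
\[
|x|\le\sqrt{\epsilon_3^2+\epsilon_2^2}-\epsilon_3=\frac{\epsilon_2^2}{\sqrt{\epsilon_3^2+\epsilon_2^2}+\epsilon_3}\le\frac{\epsilon_2^2}{2\epsilon_3},
\]
since $\sqrt{\epsilon_3^2+\epsilon_2^2}\ge\epsilon_3$. This is exactly what is needed. Equivalently, the hypothesis says that $|x|$ lies below the positive root of $x^2+2\epsilon_3|x|-\epsilon_2^2=0$, which gives the slightly stronger inequality $\epsilon_2^2\ge x^2+2\epsilon_3|x|$. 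The case $\epsilon_3=0$ is immediate, because then $b=0$.

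The only step that needs care is getting the three trace quantities right: identifying $rA^\top c$ with $u_i^\top\Sigma^3u_j$ uses symmetry and orthogonality of $U$. After that, the Cauchy--Schwarz bound on the diagonal term and the algebraic manipulation of the interval are routine. The assumption $\Sigma\preceq\mathbb I$ does not seem to be needed in this argument, apart from fixing the scale of $\epsilon_2$ and $\epsilon_3$.
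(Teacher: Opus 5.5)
Your argument is correct, and it takes a different route from the paper at the diagonal term. The paper discards the term $-2e/a$ (with $e=u_i^\top\Sigma^3u_i$). It then uses $\Sigma^2\preceq\Sigma$, i.e.\ the hypothesis $\Sigma\preceq\mathbb{I}$, to get $d_i^2/a^2\le 1$, so the diagonal residual has squared norm at most $\|A\|_F^2+1$. It therefore needs the off-diagonal correction $d_id_j/x^2-2b/x$ to be at least $1$, which is exactly $x^2+2bx\le d_id_j$. This is why the hypothesis takes the form of the positive root of $t^2+2\epsilon_3 t-\epsilon_2^2=0$.

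Your Cauchy--Schwarz step $d_i^2\le a\,e$ instead shows that the diagonal correction is at most $-e/a\le 0$. This is the PSD fact that a diagonal Schur-complement step cannot increase the Frobenius norm. You then only need the off-diagonal correction to be nonnegative, i.e.\ $2bx\le d_id_j$. The gain is a strictly weaker sufficient condition: $|x|\le\epsilon_2^2/(2\epsilon_3)$ would already suffice. Your argument is also scale-invariant and, as you note, never uses $\Sigma\preceq\mathbb{I}$. The paper's cruder bound is what forces both that normalization and the extra $x^2$ in its hypothesis.
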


\begin{proof}
For any nonzero pivot $A_{ij}=x$, direct expansion of the Frobenius norm gives
\begin{equation}\label{eq:psd-offdiag-residual-closed-form}
\left\|A-A(:,j)x^{-1}A(i,:)\right\|_F^2
=
\|A\|_F^2-\frac{2b}{x}+\frac{d_id_j}{x^2}.
\end{equation}
Indeed,
$\inner{A,A(:,j)A(i,:)}_F=(A^3)_{ij}=b$, while
$\|A(:,j)A(i,:)\|_F^2=(A^2)_{jj}(A^2)_{ii}=d_jd_i$.

Set
\(
a=u_i^\top\Sigma u_i, e=u_i^\top\Sigma^3u_i.
\)
For the diagonal pivot,
\[
\left\|A-A(:,i)a^{-1}A(i,:)\right\|_F^2
=
\|A\|_F^2-\frac{2e}{a}+\frac{d_i^2}{a^2}.
\]
Both $e$ and $a$ are positive as $\Sigma$ has positive entries, and hence
\begin{equation}\label{eq:diag-residual-upper}
\left\|A-A(:,i)a^{-1}A(i,:)\right\|_F^2
\leq
\|A\|_F^2+\frac{d_i^2}{a^2}.
\end{equation}
Because $\Sigma^2\preceq\Sigma$, we have $d_i/a\leq1$. Comparing
\eqref{eq:diag-residual-upper} with \eqref{eq:psd-offdiag-residual-closed-form}, it is therefore sufficient that \(x^2+2bx\leq d_id_j. \)
The hypotheses imply \(x^2+2bx\leq x^2+2\epsilon_3|x|\leq\epsilon_2^2=d_id_j,\) 
where the second inequality is equivalent to \eqref{eq:diag-pivot-sufficient-interval}.
\end{proof}

When $\epsilon_3\ll\epsilon_2$, condition \eqref{eq:diag-pivot-sufficient-interval} is approximately
$|u_i^\top\Sigma u_j|^2\lesssim d_id_j$. It resembles Cauchy--Schwarz, but the comparison involves $\Sigma^2$ in the right-hand-side magnitudes and therefore depends on the spectral decay.

For diagonal pivots, trace error is especially useful because the residual remains positive semidefinite. The next estimate relates this error to the leading eigenvector coordinates. The bound below yields a squared conditioning number. 

\begin{theorem} \label{thm:sym_res_trace}
Let $A=U\Sigma U^\top$, where $U\in\R^{n\times n}$ is orthogonal and
$\sigma_1\geq\cdots\geq\sigma_n\geq0$. Let
$I=(i_1,\ldots,i_k)$ be diagonal pivot indices, assume $A(I,I)$ is nonsingular, and let $P_k$ be the projection onto the first $k$ coordinates. Define
\[
X=
\begin{pmatrix}
|&&|\\
u_{i_1}^\top&\cdots&u_{i_k}^\top\\
|&&|
\end{pmatrix}\in\R^{n\times k},
\qquad
U^{(0)}=P_kX\in\R^{k\times k}.
\]
If $U^{(0)}$ is nonsingular, then
\begin{equation}\label{eq:psd-trace-bound}
\operatorname{tr}\!\left(A-A(:,I)A(I,I)^{-1}A(I,:)\right)
\leq
(n-k)\left(1+\|(U^{(0)})^{-1}\|_2\right)^2
\sum_{q=k+1}^n\sigma_q.
\end{equation}
\end{theorem}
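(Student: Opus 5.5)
The plan is to use the Gram structure: the diagonal of a Nyström residual consists of squared distances to a span. Write $F=U\Sigma^{1/2}$, so that $A=FF^\top$ and $A_{ij}=\inner{u_i,u_j}_\sigma=\langle \Sigma^{1/2}u_i,\Sigma^{1/2}u_j\rangle$.

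\textbf{Step 1 (distance formula).} For diagonal pivots, the $\ell$th diagonal entry of $E=A-A(:,I)A(I,I)^{-1}A(I,:)$ is the Schur complement
\[
E_{\ell\ell}=\frac{\det A(I+\ell,I+\ell)}{\det A(I,I)},
\]
which is the ratio of Gram determinants. By Theorem~\ref{thm:basic_res_form} it equals $\|u_I\wedge u_\ell\|_\sigma^2/\|u_I\|_\sigma^2$. Geometrically, this is the squared $\sigma$-weighted distance from $u_\ell$ to $\Span\{u_{i_1},\ldots,u_{i_k}\}$. Consequently, for any coefficient vector $c\in\R^k$,
\[
E_{\ell\ell}\le \|u_\ell-Xc\|_\sigma^2 .
\]
For $\ell\in I$ this distance is zero, so at most $n-k$ diagonal entries contribute to the trace. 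This accounts for the factor $(n-k)$.

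\textbf{Step 2 (choice of $c$).} Choose $c=(U^{(0)})^{-1}P_ku_\ell$. Then $P_k(u_\ell-Xc)=P_ku_\ell-U^{(0)}c=0$, so the leading $k$ coordinates cancel exactly. This is the analogue of the leading-coordinate dependence in Lemma~\ref{lem:exterior_lin_dep}. It leaves only tail weights:
\[
\|u_\ell-Xc\|_\sigma^2=\sum_{q>k}\sigma_q\bigl((u_\ell)_q-(Xc)_q\bigr)^2 .
\]

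\textbf{Step 3 (entrywise bound).} For each $q>k$, the triangle inequality and Cauchy--Schwarz give
\[
|(u_\ell)_q-(Xc)_q|\le |(u_\ell)_q|+\|X(q,:)\|_2\,\|c\|_2 .
\]
The three factors are controlled as follows:
\begin{itemize}
\item Since $U$ is orthogonal, $|(u_\ell)_q|\le 1$.
\item $X(q,:)=(U_{i_1q},\ldots,U_{i_kq})$ is part of the $q$th column of $U$, so $\|X(q,:)\|_2\le 1$.
\item $\|c\|_2\le\|(U^{(0)})^{-1}\|_2\|P_ku_\ell\|_2\le\|(U^{(0)})^{-1}\|_2$, because rows of $U$ have unit norm.
\end{itemize}
Hence each tail term is at most $(1+\|(U^{(0)})^{-1}\|_2)^2\sigma_q$. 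Summing over $q>k$ and then over the $n-k$ non-pivot indices $\ell$ yields \eqref{eq:psd-trace-bound}.

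The main point to verify carefully is Step 1: the identification of $E_{\ell\ell}$ with a minimal distance, rather than with a specific projection. This can be done via the Schur complement as a minimum of a quadratic form over $c$, i.e.\ the least-squares characterization of the Gram-determinant ratio. The remaining steps are crude but routine estimates, and they lose sharpness only through the entrywise bounds by one.
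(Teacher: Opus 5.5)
Your proposal is correct and follows essentially the same route as the paper: the same coefficient vector $\alpha=(U^{(0)})^{-1}P_ku_\ell$, the same cancellation of the leading $k$ coordinates so that only the tail weights $\sigma_q$, $q>k$, remain, and the same count of at most $n-k$ nonzero diagonal entries. The differences are cosmetic: you justify $E_{\ell\ell}\le\|u_\ell-Xc\|_\sigma^2$ through the least-squares characterization of the Schur complement, whereas the paper uses invariance of $u_\ell\wedge u_I$ under adding multiples of the $u_{i_p}$ together with the Hadamard bound $\|z_\ell\wedge u_I\|_\sigma\le\|z_\ell\|_\sigma\|u_I\|_\sigma$; and you bound each tail coordinate via $\|X(q,:)\|_2\le1$, where the paper uses $\|z_\ell\|_{\max}\le\|z_\ell\|_2$ and $\|X\|_2=1$.
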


 \begin{proof}
     Let $E=A-A(:,I)A(I,I)^{-1}A(I,:)$. We will bound $E_{\ell,\ell}$, for all $\ell$ and then obtain a bound on the trace. Note that the $P_k u_{i_p}$ make up a basis for $\R^k$. So there exists $\alpha\in\R^k$ such that 
     \[
     P_k u_{\ell} = \alpha_1 P_k u_{i_1} + ... + \alpha_k P_k u_{i_k}.
     \]
      Thus, $P_k u_\ell = U^{(0)} \alpha$ and $\|\alpha\|_2 \leq \|(U^{(0)})^{-1}\|_2 \|P_k u_\ell\|_2 \leq \|(U^{(0)})^{-1}\|_2$. Now consider the vector $z_\ell = u_\ell - X\alpha$ whose first $k$ coordinates vanish. From Theorem~\ref{thm:basic_res_form}, $E_{\ell,\ell}$ can be represented as 
      \[
     E_{\ell,\ell} = \frac{\|u_\ell\wedge u_I\|_\sigma^2}{\|u_I\|_\sigma^2} = \frac{\|z_{\ell}\wedge u_I\|_\sigma^2}{\|u_I\|_\sigma^2}\leq \|z_\ell\|_\sigma^2.
     \]
     The second equality comes from the fact that the exterior product is unchanged by adding multiples of $u_{i_p}$ to $u_\ell$ for any $p$, and the last inequality comes from the fact that the norm of the $k+1$-blade is the volume of the parallelopiped spanned by the vectors, which is always less than the product of the norms of said vectors. 

     Finally, observe that $z_\ell$ vanishes in the first $k$ coordinates, and the max-norm of the remaining coordinates can be bounded by \[
     \|z_\ell\|_{\text{max}} \leq \|z_\ell\|_2 \leq \|u_\ell\|_2 + \|X\|_2\|\alpha\|_2 \leq 1 + \|(U^{(0)})^{-1}\|_2.
     \]
     Note that we have used the fact that the $u_{i_p}$'s are orthonormal. We then arrive as \[
     \|z_\ell\|_\sigma^2 \leq (1 + \|(U^{(0)})^{-1}\|_2)^2\sum_{q=k+1}^n\sigma_q.
     \]
     The result follows from observing that there are $n-k$ non-zero diagonal elements, all satisfying the same bound. 
 \end{proof}

The tail sum in \eqref{eq:psd-trace-bound} is the optimal rank-$k$ trace error. The result therefore identifies a geometric quasi-optimality factor: the approximation is favorable when the selected eigenvector rows form a well-conditioned basis for the leading $k$ coordinates. In the best such cases $\|(U^{(0)})^{-1}\|_2\approx 1$, leading to an error which is a factor of $2(n-k)$ from optimal. An interesting feature of this estimate is the fact that it tends to improve as $k$ increases, in contrast to estimates for the error in the greedy algorithm, which grow exponentially.

\subsection{The rank-one update}\label{sec:single_bound}

For a single pivot, the residual blades are bivectors, whose weighted norms
are determined by weighted angles. This gives a particularly transparent
form of the residual estimate. 
Let
\[
E^{(\widehat i,\widehat j)}
=
A-A(:,\widehat j)A_{\widehat i,\widehat j}^{-1}A(\widehat i,:)
\]
denote the residual after selecting the nonzero pivot
$(\widehat i,\widehat j)$.

\begin{theorem}\label{thm:single_iter_cos_bound}
Suppose $A_{\widehat i,\widehat j}\neq0$. Then, for every $\ell$ and $p$
for which the weighted angles below are defined,
\begin{equation}\label{eq:rank-one-angle-bound}
\left|E^{(\widehat i,\widehat j)}_{\ell p}\right|
\leq
\frac{
\|u_\ell\|_\sigma\|v_p\|_\sigma
\left|\sin\angle_\sigma(u_\ell,u_{\widehat i})\sin\angle_\sigma(v_p,v_{\widehat j})\right|
}{
\left|\cos\angle_\sigma(u_{\widehat i},v_{\widehat j})\right|
}.
\end{equation}
\end{theorem}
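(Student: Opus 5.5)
We specialize the normalized residual formula \eqref{eq:normalized-blade-residual} of Theorem~\ref{thm:basic_res_form} to $k=1$, with $I=(\widehat i)$ and $J=(\widehat j)$. The normalized pivot block is then the $1\times1$ matrix $\bar S=\cos\angle_\sigma(u_{\widehat i},v_{\widehat j})$, which is nonzero because $A_{\widehat i,\widehat j}\neq0$. Writing $\widehat u=\widehat u_{\widehat i}$ and $\widehat v=\widehat v_{\widehat j}$, this gives
\[
E^{(\widehat i,\widehat j)}_{\ell p}
=
\frac{\|u_\ell\|_\sigma\|v_p\|_\sigma\,
\inner{\widehat u\wedge\widehat u_\ell,\,\widehat v\wedge\widehat v_p}_\sigma}
{\cos\angle_\sigma(u_{\widehat i},v_{\widehat j})}.
\]
The denominator and the two prefactors already match \eqref{eq:rank-one-angle-bound}. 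It therefore remains to bound the bivector inner product by the product of the two sines.

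We apply Cauchy--Schwarz in $\bigwedge^2\R^r$ with the weighted inner product, exactly as in the proof of Theorem~\ref{thm:k_blade_inner_bound1}:
\[
|\inner{\widehat u\wedge\widehat u_\ell,\widehat v\wedge\widehat v_p}_\sigma|
\le
\|\widehat u\wedge\widehat u_\ell\|_\sigma\,\|\widehat v\wedge\widehat v_p\|_\sigma .
\]
This is legitimate because the induced form on $\bigwedge^2\R^r$ is positive definite: by Lemma~\ref{lem:k_blade_inner_form} it is a diagonal form with weights $\sigma_K\ge0$ in the $e_K$ basis. If some $\sigma_q$ vanish it is only positive semidefinite, but Cauchy--Schwarz still holds. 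Next, we compute each bivector norm from the definition as a $2\times2$ Gram determinant:
\[
\|\widehat u\wedge\widehat u_\ell\|_\sigma^2
=
\|\widehat u\|_\sigma^2\|\widehat u_\ell\|_\sigma^2-\inner{\widehat u,\widehat u_\ell}_\sigma^2
=
1-\cos^2\angle_\sigma(u_{\widehat i},u_\ell)
=
\sin^2\angle_\sigma(u_\ell,u_{\widehat i}),
\]
and likewise $\|\widehat v\wedge\widehat v_p\|_\sigma=|\sin\angle_\sigma(v_p,v_{\widehat j})|$. Substituting these into the displayed residual formula yields \eqref{eq:rank-one-angle-bound}.

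There is no real obstacle here. The only points needing care are the hypotheses that make the angles defined: $\|u_\ell\|_\sigma$, $\|v_p\|_\sigma$, $\|u_{\widehat i}\|_\sigma$, and $\|v_{\widehat j}\|_\sigma$ must be nonzero. The last two are automatic, since $A_{\widehat i\widehat j}=\inner{u_{\widehat i},v_{\widehat j}}_\sigma\neq0$. We also remark that if $\|u_\ell\|_\sigma=0$, then row $\ell$ of $A$ vanishes, so the residual entry is zero and the bound holds trivially with the prefactor interpreted as zero.
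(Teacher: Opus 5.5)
Your proposal is correct and matches the paper's proof: you specialize the normalized residual formula \eqref{eq:normalized-blade-residual} to $k=1$, identify $\bar S$ with the weighted cosine of the pivot rows, compute each bivector norm as a $2\times 2$ Gram determinant equal to $\sin^2$, and apply Cauchy--Schwarz. Your side remarks on the (semi)definiteness of the induced form and on the degenerate case $\|u_\ell\|_\sigma=0$ are correct but not needed for the statement as posed.
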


\begin{proof}
Apply Theorem \ref{thm:basic_res_form} with $k=1$. After normalizing the
four singular-vector rows, the denominator is
$\cos\angle_\sigma(u_{\widehat i},v_{\widehat j})$. For weighted-unit
vectors $x$ and $y$,
\[
\|x\wedge y\|_\sigma^2
=
\det
\begin{pmatrix}
1 & \inner{x,y}_\sigma\\
\inner{x,y}_\sigma & 1
\end{pmatrix}
=
1-\inner{x,y}_\sigma^2
=
\sin^2\angle_\sigma(x,y).
\]
Cauchy--Schwarz applied to the two bivectors in the numerator of
\eqref{eq:normalized-blade-residual} now gives
\eqref{eq:rank-one-angle-bound}.
\end{proof}

Theorem \ref{thm:single_iter_cos_bound} provides a concrete way to view the approximate annihilation of residual rows/columns. Row $\ell$ is annihilated when $u_\ell$ is nearly parallel under the $\sigma$-inner product to $u_{\widehat i}$.

\begin{corollary} 
\label{corr:cur_res_from_cosines}
Let $\varepsilon_u,\varepsilon_v\in[0,1]$, and suppose
\[
\cos\angle_\sigma(u_\ell,u_{\widehat i})
\geq1-\varepsilon_u,
\qquad
\cos\angle_\sigma(v_p,v_{\widehat j})
\geq1-\varepsilon_v.
\]
Then
\begin{equation}\label{eq:nearly-parallel-rank-one-bound}
\left|E^{(\widehat i,\widehat j)}_{\ell p}\right|
\leq
\frac{
\|u_\ell\|_\sigma\|v_p\|_\sigma
\sqrt{
(2\varepsilon_u-\varepsilon_u^2)
(2\varepsilon_v-\varepsilon_v^2)
}
}{
\left|\cos\angle_\sigma(u_{\widehat i},v_{\widehat j})\right|
}.
\end{equation}
In particular, if $\varepsilon_u=\varepsilon_v=\varepsilon$, then
\[
\left|E^{(\widehat i,\widehat j)}_{\ell p}\right|
\leq
\frac{
(2\varepsilon-\varepsilon^2)
\|u_\ell\|_\sigma\|v_p\|_\sigma
}{
\left|\cos\angle_\sigma(u_{\widehat i},v_{\widehat j})\right|
}
\leq
\frac{
2\varepsilon\|u_\ell\|_\sigma\|v_p\|_\sigma
}{
\left|\cos\angle_\sigma(u_{\widehat i},v_{\widehat j})\right|
}.
\]
\end{corollary}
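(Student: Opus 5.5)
The plan is to apply Theorem \ref{thm:single_iter_cos_bound} directly and to convert each sine factor into the quantity controlled by the cosine hypothesis. The only work lies in bounding $|\sin\angle_\sigma(u_\ell,u_{\widehat i})|$ from above in terms of $\varepsilon_u$, and likewise for the column factor.

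First, write $c_u=\cos\angle_\sigma(u_\ell,u_{\widehat i})$. The hypothesis gives $1-\varepsilon_u\leq c_u\leq 1$, the upper bound holding by weighted Cauchy--Schwarz. Since $\varepsilon_u\in[0,1]$, we have $1-\varepsilon_u\geq0$, so $c_u$ is nonnegative. Squaring is therefore monotone and yields $c_u^2\geq(1-\varepsilon_u)^2$. It follows that
\[
\sin^2\angle_\sigma(u_\ell,u_{\widehat i})=1-c_u^2\leq 1-(1-\varepsilon_u)^2=2\varepsilon_u-\varepsilon_u^2,
\]
where we use the identity $\sin^2=1-\cos^2$ for the weighted angle. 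This identity is exactly the bivector computation $\|x\wedge y\|_\sigma^2=1-\inner{x,y}_\sigma^2$ carried out in the proof of Theorem \ref{thm:single_iter_cos_bound}. The same argument gives $\sin^2\angle_\sigma(v_p,v_{\widehat j})\leq 2\varepsilon_v-\varepsilon_v^2$.

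Next, take square roots (both sides are nonnegative) and multiply the two estimates. Substituting into \eqref{eq:rank-one-angle-bound} then gives \eqref{eq:nearly-parallel-rank-one-bound} immediately. For the special case $\varepsilon_u=\varepsilon_v=\varepsilon$, the square root collapses to $2\varepsilon-\varepsilon^2$, because this quantity is nonnegative on $[0,1]$. The final inequality follows from dropping $-\varepsilon^2\leq0$.

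No step is a real obstacle. The only point needing care is the sign issue in squaring the cosine inequality, and the restriction $\varepsilon_u,\varepsilon_v\le1$ handles it. We also implicitly assume that the weighted angles are defined, meaning all weighted norms are nonzero, as in Theorem \ref{thm:single_iter_cos_bound}.
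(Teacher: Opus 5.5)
Your proposal is correct and matches the paper's proof: bound each squared weighted sine by $1-(1-\varepsilon)^2=2\varepsilon-\varepsilon^2$ and substitute into Theorem~\ref{thm:single_iter_cos_bound}. Your explicit check that $1-\varepsilon_u\geq 0$ (so squaring the cosine inequality is legitimate) is a small piece of care that the paper leaves implicit.
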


\begin{proof}
The assumptions imply
\[
\sin^2\angle_\sigma(u_\ell,u_{\widehat i})
\leq
1-(1-\varepsilon_u)^2
=
2\varepsilon_u-\varepsilon_u^2,
\]
and analogously for $v_p$ and $v_{\widehat j}$. Substitution into
\eqref{eq:rank-one-angle-bound} proves the result.
\end{proof}
Thus, exact parallelism on either side forces the corresponding sine factor
to vanish. If $u_\ell$ is exactly parallel to $u_{\widehat i}$, the entire $\ell$th
residual row is zero (a special case of corrolary \ref{corr:zero_M_sing_val}). The entrywise estimate also yields a Frobenius-norm criterion for comparing
rank-one pivots. Define
\[
L(\widehat i)
=
\sum_{\ell=1}^n
\|u_\ell\|_\sigma^2
\sin^2\angle_\sigma(u_\ell,u_{\widehat i}),
\qquad
R(\widehat j)
=
\sum_{p=1}^m
\|v_p\|_\sigma^2
\sin^2\angle_\sigma(v_p,v_{\widehat j}).
\]
Squaring \eqref{eq:rank-one-angle-bound} and summing over $\ell$ and $p$
gives
\begin{equation}\label{eq:rank-one-frobenius-factorization}
\left\|E^{(\widehat i,\widehat j)}\right\|_F^2
\leq
\frac{
L(\widehat i)R(\widehat j)
}{
\cos^2\angle_\sigma(u_{\widehat i},v_{\widehat j})
}.
\end{equation}

The quantities $L(\widehat i)$ and $R(\widehat j)$ measure the weighted
left and right singular-vector mass not aligned with the selected pivot
rows. A useful pivot therefore has two properties: its left and right rows
represent a large amount of the remaining singular-vector mass, and the two
pivot rows are fairly aligned. Consequently, maximizing
the pivot magnitude, or even maximizing the weighted cosine alone, need not
minimize the residual.

For the independent-coordinate model with $s=1$, Theorem
\ref{thm:k_blade_avg_inner} suggests
\[
\cos^2\angle_\sigma(x,y)
\approx
\frac{\sum_q\sigma_q^2}{\left(\sum_q\sigma_q\right)^2}
=
\frac{1}{d_{\mathrm{eff}}},
\qquad
d_{\mathrm{eff}}
=
\frac{\left(\sum_q\sigma_q\right)^2}{\sum_q\sigma_q^2}.
\]
Thus, rapid singular-value decay reduces the effective weighted dimension,
even when the ambient dimension is large. For a positive-semidefinite matrix
with a diagonal pivot, the denominator in
\eqref{eq:rank-one-frobenius-factorization} is equal to one. In that setting,
the pivot problem reduces to selecting a row that aligns with a large amount
of the remaining weighted mass. This observation serves as the basis for the 
weighted-mass pivoting rule introduced in Section
\ref{sec:weighted_mass_pivot}.

\section{Weighted-mass pivoting}\label{sec:weighted_mass_pivot}

The rank-one analysis in Section \ref{sec:single_bound} suggests that a useful
pivot should not be judged solely by its magnitude. A pivot may produce a
large reduction in the residual when its associated row and column are aligned
with many other residual rows and columns. Computing the weighted
singular-vector angles directly would require an SVD, so we instead construct
a local proxy for this collective-alignment effect.

Suppose that $A\in\R^{n\times n}$ is symmetric positive semidefinite matrix. This setup is motivated by Galerkin FEM matrices in which a decomposition for $A$ is given by the basis functions and each basis function has an associated center, such that nearby centers lead to high correlation under the $A$ inner product. Let the center for basis element $i$ be $x_i$. Let $B$ denote the
current residual, and let $\mathcal N_i$ be the set of the $\ell$ nearest centers
to $x_i$, with a fixed convention concerning whether $i$ itself is included.
Greedy diagonal pivoting selects the index maximizing $B_{ii}$. In contrast,
weighted-mass pivoting assigns candidate $i$ the score
\begin{equation}\label{eq:weighted-mass-score}
    m_i
    =
    \sum_{j\in\mathcal N_i}
    \bigl(B_{jj}A_{ji}\bigr)^2.
\end{equation}
Here $B_{jj}$ measures the residual mass associated with index $j$, while
$A_{ji}$ serves as a proxy for the correlation between indices $j$ and $i$.
Thus, a candidate receives a large score when it is strongly correlated with
several nearby indices that still carry substantial residual mass.

\begin{algorithm}[t]
\caption{Weighted-mass diagonal ACA}\label{alg:weight_mass_piv}
\begin{algorithmic}[1]
\Require Symmetric positive-semidefinite matrix $A\in\R^{n\times n}$ or an
entry oracle for $A$, centers $X=\{x_i\}_{i=1}^n$, target rank $k_{\max}$,
neighborhood size $\ell$, and tolerance $\tau$
\Ensure Pivot set $I$
\State Precompute $\mathcal N_i\gets\Call{Nearest}{X,i,\ell}$ for
$i=1,\ldots,n$
\State $B\gets A$, $I\gets\varnothing$
\For{$k=1,\ldots,k_{\max}$}
    \For{$i\in[n]\setminus I$}
        \State
        $m_i\gets\displaystyle
        \sum_{j\in\mathcal N_i}\bigl(B_{jj}A_{ji}\bigr)^2$
    \EndFor
    \State
    $p\gets\displaystyle
    \operatorname*{arg\,max}_{i\in[n]\setminus I}m_i$
    \If{$B_{pp}\leq\tau$}
        \State \algorithmicbreak
    \EndIf
    \State $I\gets I\cup\{p\}$
    \State $B\gets B-B(:,p)B_{pp}^{-1}B(p,:)$
\EndFor
\State \Return $I$
\end{algorithmic}
\end{algorithm}

Algorithm \ref{alg:weight_mass_piv} is written using the explicit residual
matrix for clarity. A matrix-free implementation can instead maintain the
residual diagonal and the selected low-rank factors, evaluating the required
entries through an entry oracle. Once the neighborhoods have been
precomputed, evaluating all scores costs $\O(n\ell)$ arithmetic per iteration
and is parallel across the candidate indices. The neighborhood table requires
$\O(n\ell)$ storage, while the diagonal and score vectors require $\O(n)$
storage. The storage required for the ACA factors grows linearly with the
selected rank. 
The nearest-neighbor search is independent of the pivoting rule and may be
implemented using a brute-force search, a spatial tree, or a GPU routine. The
specific insertion-based GPU routine used in the experiments is therefore
reported separately in Appendix \ref{apx:nearest-neighbor}.

The hyperparameter $\ell$ can be viewed as a measurement of the global information considered. With $\ell=1$, the algorithm does a greedy search, but considers $B_{jj}A_{jj}$ rather than the $B_{jj}$ that would come from the typical greedy algorithm. In practice this is shown to preform comparably with the standard greedy algorithm. When $\ell=n$, the algorithm considers the entire diagonal of $E$ and how correlated each element is to the proposed pivot. Taking into account more global information will theoretically improve the approximation, but, as we will see in Section \ref{sec:galerkin_experiments}, often only a few nearby basis elements are needed. 

\section{Numerical experiments}\label{sec:numerics}

We present two numerical studies illustrating the geometric mechanism developed
in Sections \ref{sec:understand_k_blades}--\ref{sec:single_bound}. The first is
a diagnostic example showing why the largest residual entry can be a poor
pivot. The second evaluates the weighted-mass strategy from
Section \ref{sec:weighted_mass_pivot} on positive-semidefinite radial-basis
Galerkin matrices. Throughout this section, approximation quality is measured
by the Frobenius norm of the residual.

\subsection{Asymmetric spiral-kernel diagnostic}
\label{sec:spiral_kernel_analysis}

Consider two point sets
\[
    X=\{x_i\}_{i=1}^{200},
    \qquad
    Y=\{y_j\}_{j=1}^{200},
\]
with points $x_i,y_j\in\R^2$, lying on opposing planar spirals, as shown in
Figure \ref{fig:spiral_points}. We construct the asymmetric inverse-distance
kernel matrix
\begin{equation}\label{eq:spiral-kernel}
    A_{ij}
    =
    k(x_i,y_j),
    \qquad
    k(x,y)
    =
    \frac{\alpha^2}{\|x-y\|_2}.
\end{equation}
Related examples (both symmetric and asymmetric) are known to be difficult for greedy
diagonal pivoting
\cite{Chen_RandomlyPivotedCholesky2025,
Gilles_LowRankApproximationRandomly2026}.

\begin{figure}[ht]
    \centering
    \safeincludegraphics[width=0.60\linewidth]{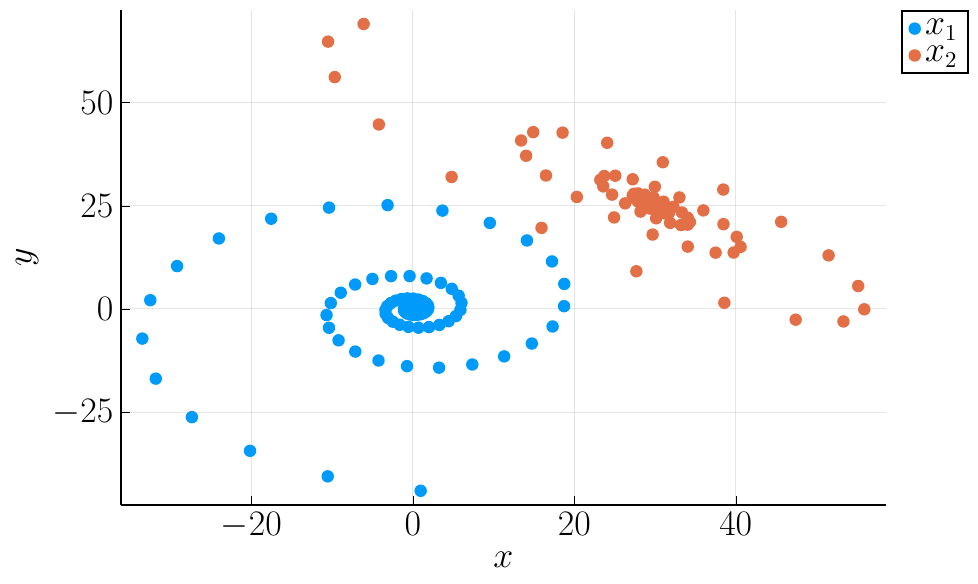}
    \caption{Two spiral point sets used to construct the asymmetric
    inverse-distance kernel matrix.}
    \label{fig:spiral_points}
\end{figure}

Figure \ref{fig:spiral_diagnostics} summarizes the behavior of greedy ACA on
this example. The heat map in Figure \ref{fig:spiral_diagnostics}a reveals two
competing structures. An isolated row and column contain the largest entries,
corresponding to outer points on one spiral that pass close to the opposing
spiral. At the same time, the matrix contains a substantially larger block of
moderately large and nearly constant entries generated by the dense groups of
points near the two spiral centers.

\begin{figure}[t]
    \centering
    \begin{minipage}[t]{0.323\textwidth}
        \centering
        \safeincludegraphics[width=\linewidth]{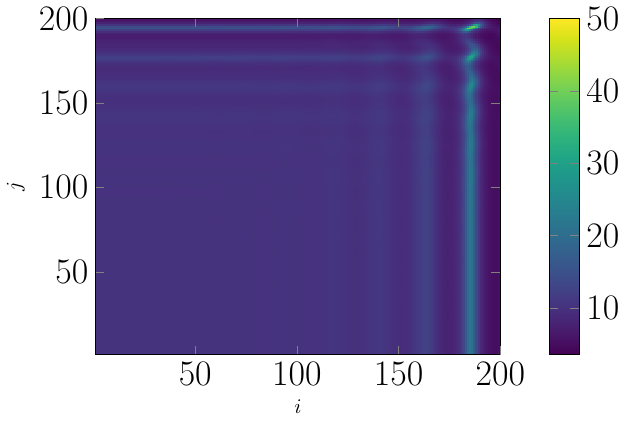}\\[2pt]
        \small (a) Kernel matrix heat map.
        \label{fig:spiral_K_heatmap}
    \end{minipage}\hfill
    \begin{minipage}[t]{0.323\textwidth}
        \centering
        \safeincludegraphics[width=\linewidth]{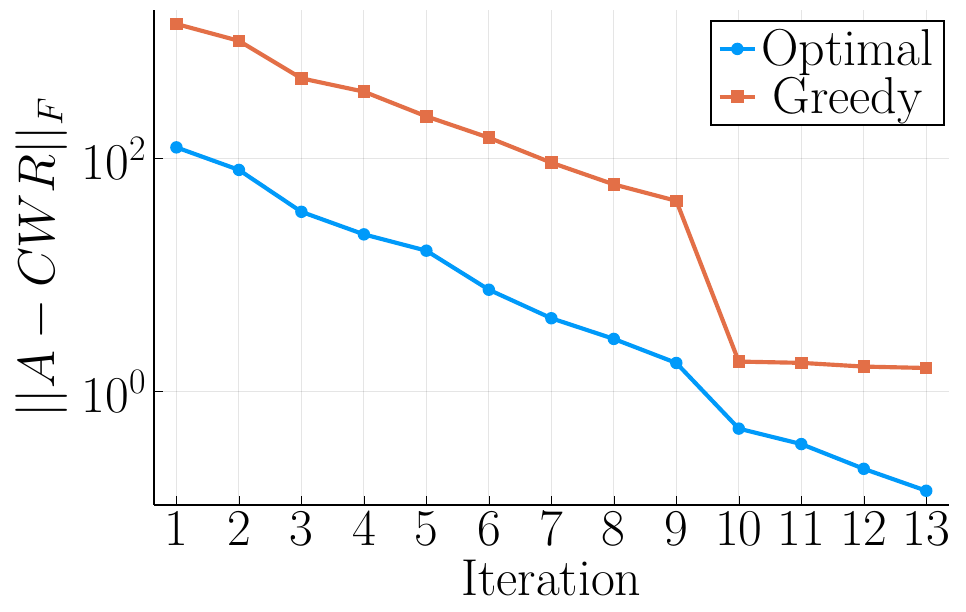}\\[2pt]
        \small (b) Greedy ACA and SVD errors.
        \label{fig:greedy_vs_opt_spiral}
    \end{minipage}\hfill
    \begin{minipage}[t]{0.323\textwidth}
        \centering
        \safeincludegraphics[width=\linewidth]{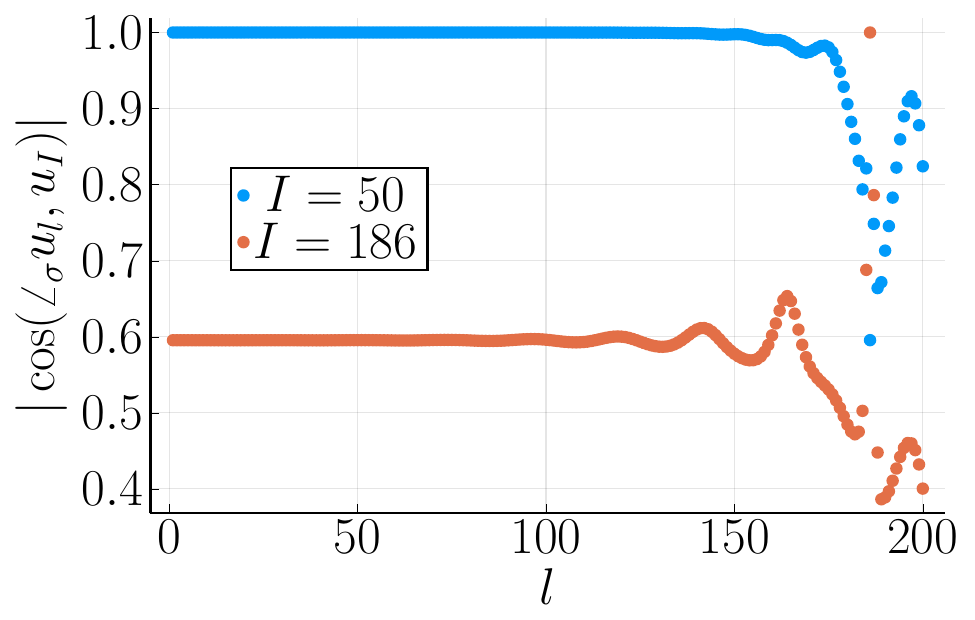}\\[2pt]
        \small (c) Weighted row alignments.
        \label{fig:sig_cos_svs_spiral}
    \end{minipage}

    \caption{Diagnostics for the asymmetric spiral-kernel example.
    The three panels show the matrix structure, the approximation error of
    greedy ACA relative to the truncated-SVD benchmark, and the weighted
    alignment of the candidate singular-vector rows.}
    \label{fig:spiral_diagnostics}
\end{figure}

Although the singular values decay rapidly, Figure
\ref{fig:spiral_diagnostics}b shows that the greedy ACA residual remains well
above the optimal rank-$k$ truncated-SVD error over the displayed range of
ranks. Thus, the poor approximation is not caused by the absence of an
accurate low-rank representation, but by the indices selected by the greedy
pivot rule.

To understand why the greedy pivot provides a poor approximation, consider the first greedy pivot, 
\(
    (\widehat i,\widehat j)=(186,195).
\)
The row $u_{186}$ is poorly aligned with most of the other left
singular-vector rows in the weighted geometry. In contrast, $u_{50}$ is nearly
parallel to a broad collection of rows, while the column index $75$ is chosen
analogously from the correlated column group. Figure
\ref{fig:spiral_diagnostics}c compares the weighted alignments of the
geometry-aware row $u_{50}$ and the greedy row $u_{186}$.

This behavior is consistent with
Corollary \ref{corr:cur_res_from_cosines}: a pivot row and column that are
nearly parallel to many other weighted singular-vector rows can approximately
annihilate a large collection of residual entries. The reported first-step
residuals are
\[
    \|E_{\mathrm{greedy}}\|_F
    \approx 1414.7,
    \qquad
    \|E_{(50,75)}\|_F
    \approx 125.9.
\]
The pivot $(50,75)$ therefore reduces the first-step residual by more than an
order of magnitude. Entrywise magnitude favors the isolated extreme visible
in Figure \ref{fig:spiral_diagnostics}a, whereas the weighted geometry favors a
moderately large pivot that represents many rows and columns simultaneously.

This example is diagnostic rather than a complete algorithmic comparison: it
contrasts two first-step pivots but does not report a full geometry-aware ACA
trajectory over multiple iterations. However, we believe that this example provides an explanation as to why randomized pivot selection such as RPC or RPLU \cite{Chen_RandomlyPivotedCholesky2025, Gilles_LowRankApproximationRandomly2026} can so often out-perform the greedy algorithm on such matrices. In such cases, there are many more pivot selections which are highly correlated, so the diagonal sampling is more likely to pick up one of the correlated pivots ($i\in\{1,...,150\}$ in this case) than the uncorrelated pivot. 

\subsection{Radial-basis Galerkin experiments}
\label{sec:galerkin_experiments}

We next evaluate weighted-mass pivoting on positive-semidefinite stiffness
matrices arising from a radial-basis Galerkin discretization. We mostly follow the classical FEM problem formulation \cite{larson_finite_2010}, but with basis functions and quadrature akin to \cite{shaw_radial_2025}. Consider the
elliptic problem
\[
    -\Delta u+cu=f
    \qquad\text{in }\Omega\subseteq\R^2,
\]
with boundary conditions chosen such that the weak problem is well posed. The
associated bilinear form is
\[
    a(u,v)
    =
    \int_\Omega
    \bigl(\nabla u\cdot\nabla v+cuv\bigr).
\]
A selection of basis functions $\{\varphi_i\}_{i=1}^n$ produces the stiffness matrix
\begin{equation}\label{eq:galerkin-stiffness}
    A_{ij}
    =
    a(\varphi_j,\varphi_i)
    =
    \int_\Omega
    \left(
        \nabla\varphi_i\cdot\nabla\varphi_j
        +
        c\varphi_i\varphi_j
    \right).
\end{equation}

In all numerical experiments below, we use $c(x,y)=\frac{1}{0.1 + |x-y|}$, but similar results can be seen for other choices of $c$. 

Local polynomial finite-element bases generally yield sparse stiffness
matrices, for which a global low-rank approximation may be unnecessary. Here
we instead use globally supported Gaussian radial basis functions
\begin{equation}\label{eq:gaussian-rbf}
    \varphi_i(x)
    =
    \exp\bigl(-\epsilon\|x-x_i\|_2^2\bigr).
\end{equation}
Such bases arise in meshfree radial-basis discretizations of elliptic
problems
\cite{Liu_MeshfreeRadialPoint2005,Kien_RadialBasisFunction2023}.
On nonuniform point sets, nearby basis functions may be strongly correlated
under the energy inner product, making these matrices natural test cases for
the weighted-mass rule.

\subsubsection{Experimental protocol}

The stiffness entries in \eqref{eq:galerkin-stiffness} are evaluated using
seven-point Gaussian quadrature on each triangle, following
\cite{shaw_radial_2025}. We compare the following three pivoting strategies: (1) greedy diagonal pivoting; (2) our weighted-mass pivoting from Algorithm \ref{alg:weight_mass_piv}; (3) randomly pivoted Cholesky (RPC) \cite{Chen_RandomlyPivotedCholesky2025}.

Unless otherwise stated, weighted-mass pivoting uses
neighborhood size $\ell=5$. The neighborhoods are computed once before the ACA
iteration; the preprocessing routine is described in
Appendix \ref{apx:nearest-neighbor}.

\subsubsection{Nonuniform circular point set}
\label{sec:nonuniform_circ}

The first set of experiments uses points of the form
\[(\rho\cos\theta,\rho\sin\theta),\]
where the radial coordinates $\rho$ are logarithmically spaced and the angles $\theta$ are uniformly spaced. This sampling concentrates more points near the outer boundary. Figure~\ref{fig:nonuniform_circ_tri} illustrates a $10\times 10$ example. For the approximation experiments, we use $50$ radial and $50$ angular coordinates, resulting in $2500$ centers. The triangulation is computed using \lstinline{DelaunayTriangulation.jl} \cite{VandenHeuvel2024DelaunayTriangulation}.

\begin{figure}[t]
    \centering
    \safeincludegraphics[width=0.52\linewidth]
    {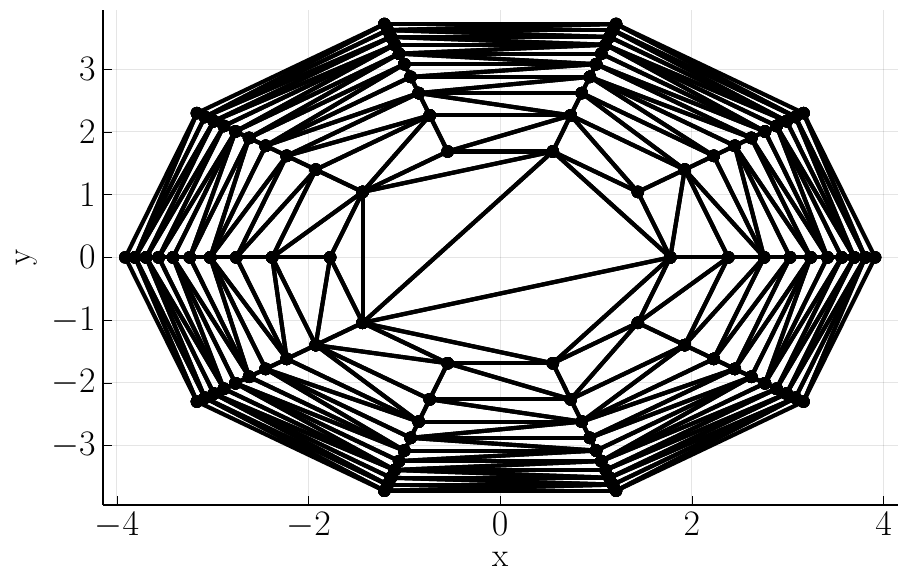}
    \caption{Triangulation of the smaller nonuniform circular point set used
    for visualization.}
    \label{fig:nonuniform_circ_tri}
\end{figure}

\begin{figure}[t]
    \centering
    \begin{minipage}[t]{0.323\textwidth}
        \centering
        \safeincludegraphics[width=\linewidth]
        {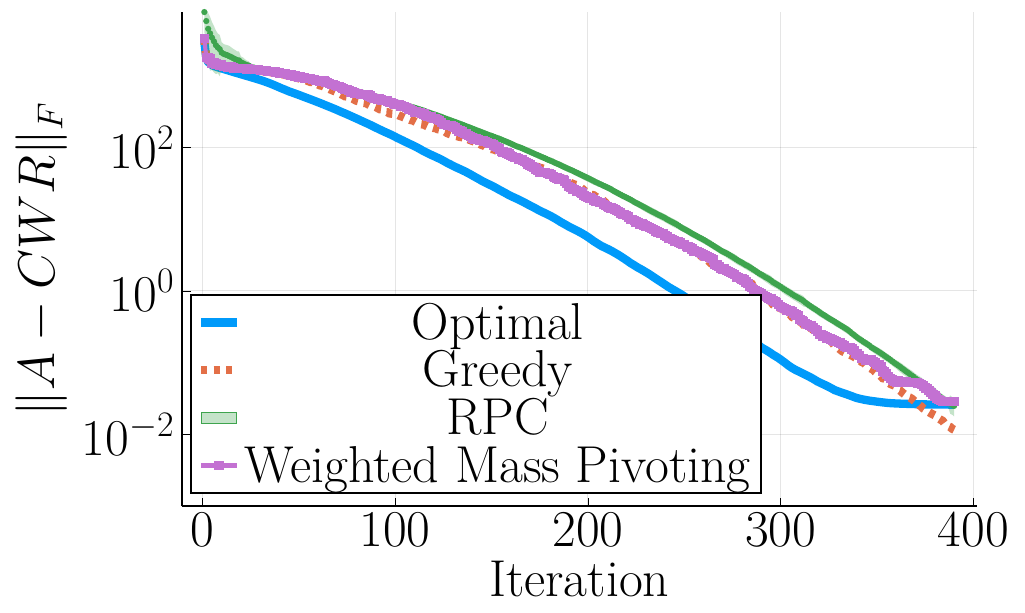}
        \small (a) $\epsilon=3$
        \label{fig:nonuniform-eps3}
    \end{minipage}
    \hfill
    \begin{minipage}[t]{0.325\textwidth}
        \centering
        \safeincludegraphics[width=\linewidth]
        {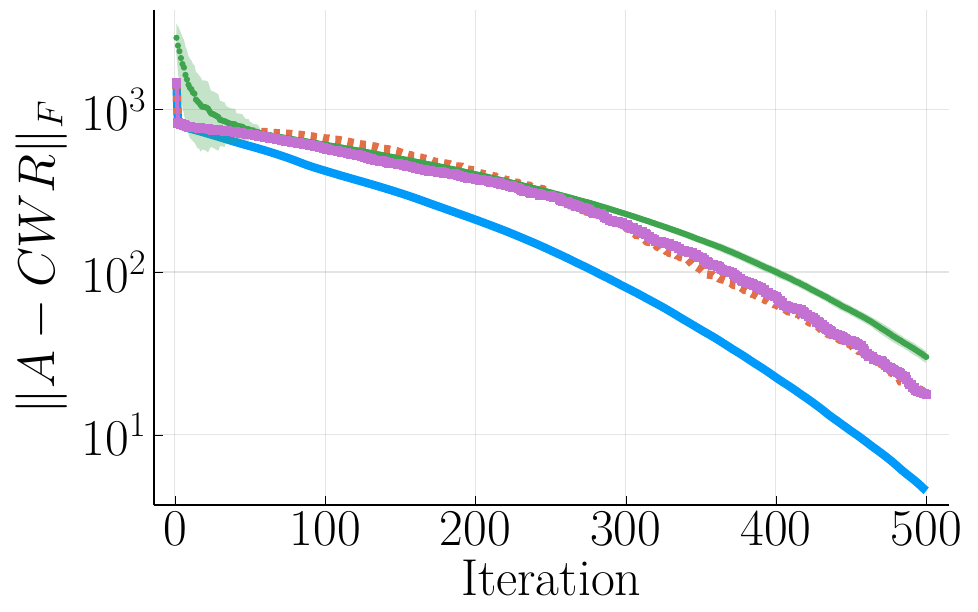}
        \small (b) $\epsilon=11$
        \label{fig:nonuniform-eps11}
    \end{minipage}
    \hfill
    \begin{minipage}[t]{0.325\textwidth}
        \centering
        \safeincludegraphics[width=\linewidth]
        {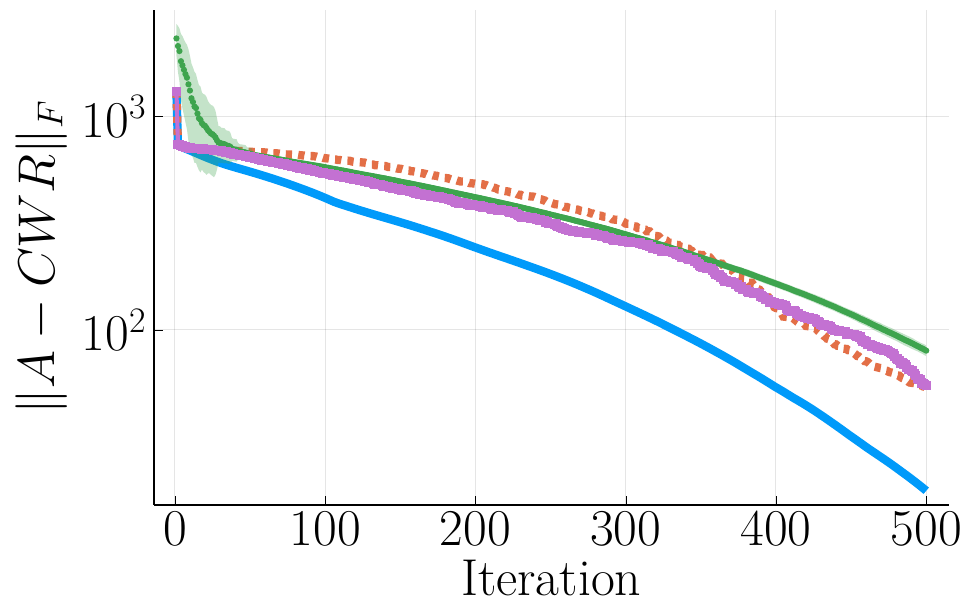}
        \small (c) $\epsilon=15$
        \label{fig:nonuniform-eps15}
    \end{minipage}

    \caption{Frobenius residuals for greedy diagonal pivoting,
    weighted-mass pivoting, and RPC on the $2500$-point nonuniform circular
    domain. Although sometimes hard to see, RPC reports the mean of $100$ runs with one standard deviation
    shaded. Weighted-mass pivoting uses $\ell=5$.}
    \label{fig:non_unif_circ_galerkin_aca}
\end{figure}

We test the radial-basis shape parameters \(    \epsilon=3, \epsilon=11,  \epsilon=15. \) The corresponding Frobenius residual curves are shown in Figure \ref{fig:non_unif_circ_galerkin_aca}.  As shown in Figure \ref{fig:non_unif_circ_galerkin_aca}, the advantage of weighted-mass pivoting is largest for the two larger shape parameters. One possible explanation is that, when $\epsilon$ is small, the broad radial basis functions generate strongly correlated leading modes that are removed rapidly by all three methods. At larger $\epsilon$, the correlations are more localized and persist across a larger number of candidate neighborhoods, giving the weighted-mass score more opportunities to distinguish useful pivots. This interpretation is consistent with the observed curves but is not a direct consequence of the present theory.

In several of the reported tests, RPC and weighted-mass pivoting initially outperform greedy pivoting, after which the greedy method narrows the gap. Similar catch-up behavior has been reported for randomized pivoting \cite{Chen_RandomlyPivotedCholesky2025, Gilles_LowRankApproximationRandomly2026}. A plausible explanation is that, once the most strongly correlated modes have been removed, the remaining off-diagonal correlations become weak and the weighted-mass score approaches an ordinary diagonal-mass criterion. In the reported parameter study, increasing $\ell$ beyond $5$ did not visibly improve the residual curves.

\subsubsection{Clustered point sets}

The second family of experiments samples points from Gaussian clusters,
\[
    x\sim\mathcal N(y_g,s_g^2I),
\]
where $y_g$ and $s_g$ denote the center and standard deviation of cluster
$g$. The stiffness assembly is the same as in
Section \ref{sec:nonuniform_circ}, with radial-basis shape parameter
$\epsilon=10$ and weighted-mass neighborhood size $\ell=5$. We consider three
configurations:

\begin{enumerate}
    \item four clusters containing $500$, $100$, $400$, and $700$ points,
    centered at $(0,0)$, $(2,-1)$, $(-1,-1)$, and $(1,2)$, with standard
    deviations $1$, $0.4$, $0.2$, and $1.4$;

    \item three clusters of $900$ points centered at $(0,0)$, $(2,-2)$,
    and $(-2,-2)$, with standard deviations $0.5$, $0.4$, and $0.4$;

    \item twelve clusters of $150$ points arranged on the grid
    $\{0,2,4,6\}\times\{0,2,4\}$, each with standard deviation $0.35$.
\end{enumerate}

The point configurations and corresponding residual curves are shown in
Figure \ref{fig:point_groups_aca}.

\begin{figure}[t]
    \centering
    \begin{minipage}{0.99\textwidth}
        \begin{minipage}{0.32\textwidth}
        \centering
        \safeincludegraphics[width=\linewidth]{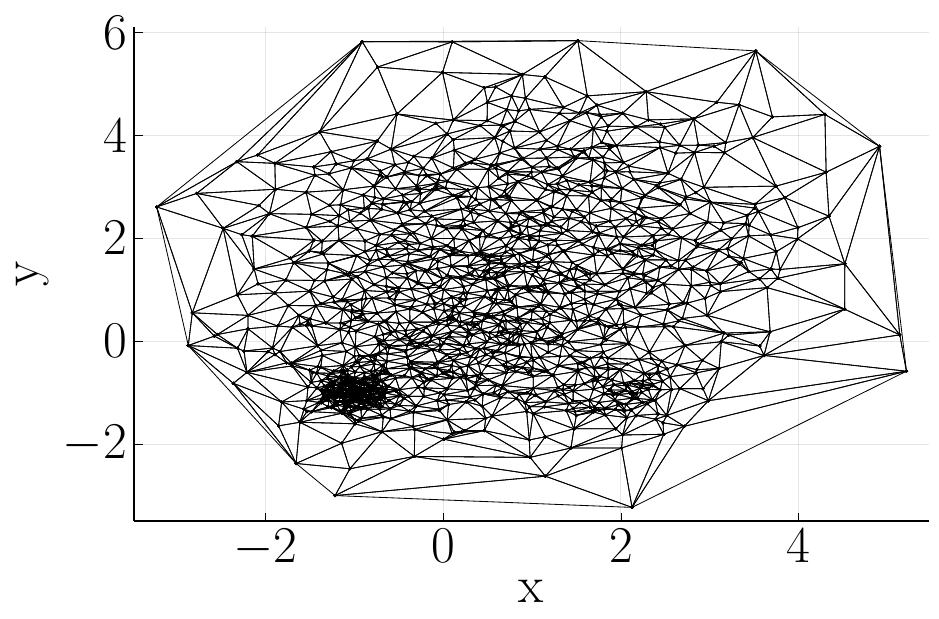}
        \end{minipage}
        \begin{minipage}{0.32\textwidth}
        \centering
        \safeincludegraphics[width=\linewidth]{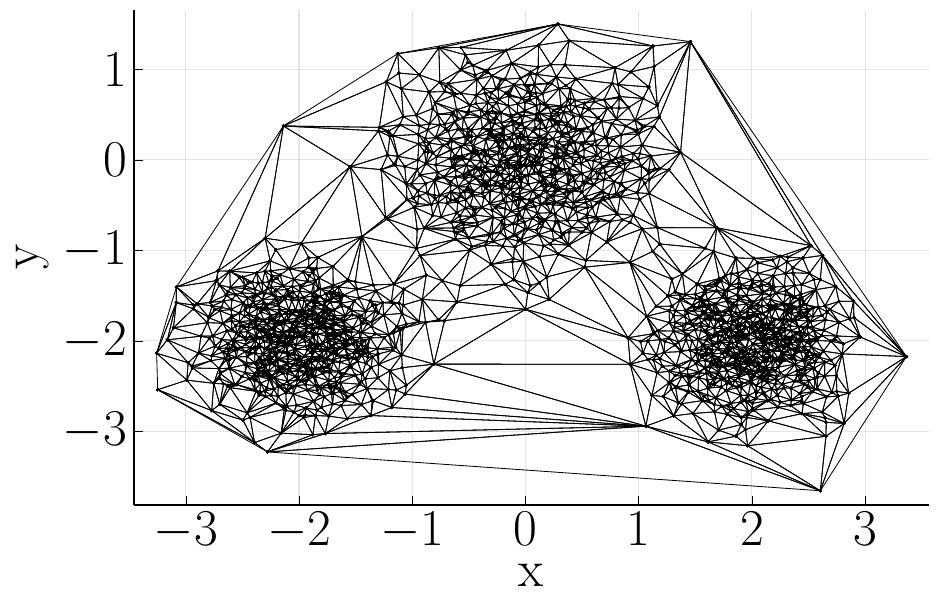}
        \end{minipage}
        \begin{minipage}{0.32\textwidth}
        \centering
        \safeincludegraphics[width=\linewidth]{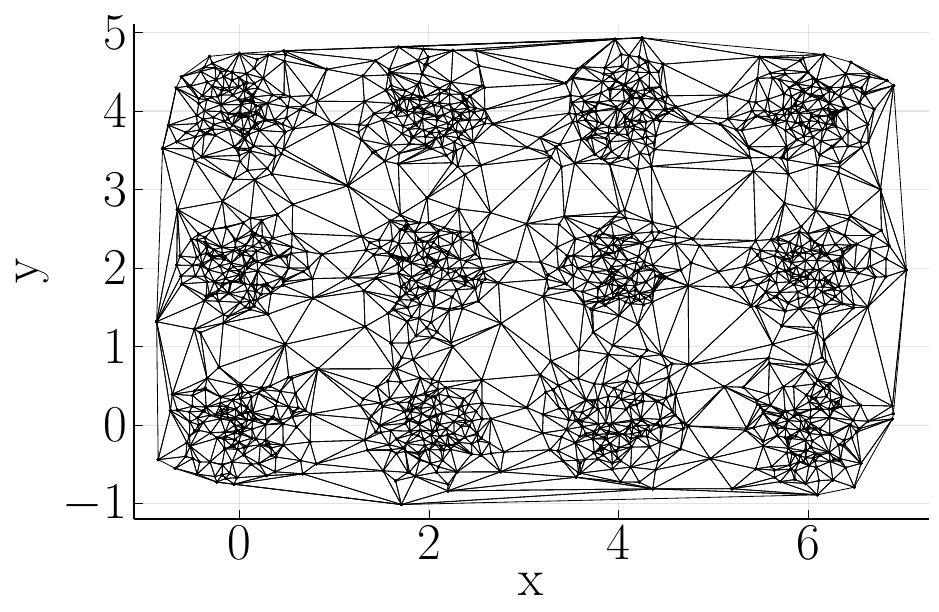}
        \end{minipage}
        \centering
        \small (a) Point clouds
        \label{fig:point_cloud_points}
    \end{minipage}
    \begin{minipage}{0.99\textwidth}
        \begin{minipage}{0.32\textwidth}
        \centering
        \safeincludegraphics[width=\linewidth]{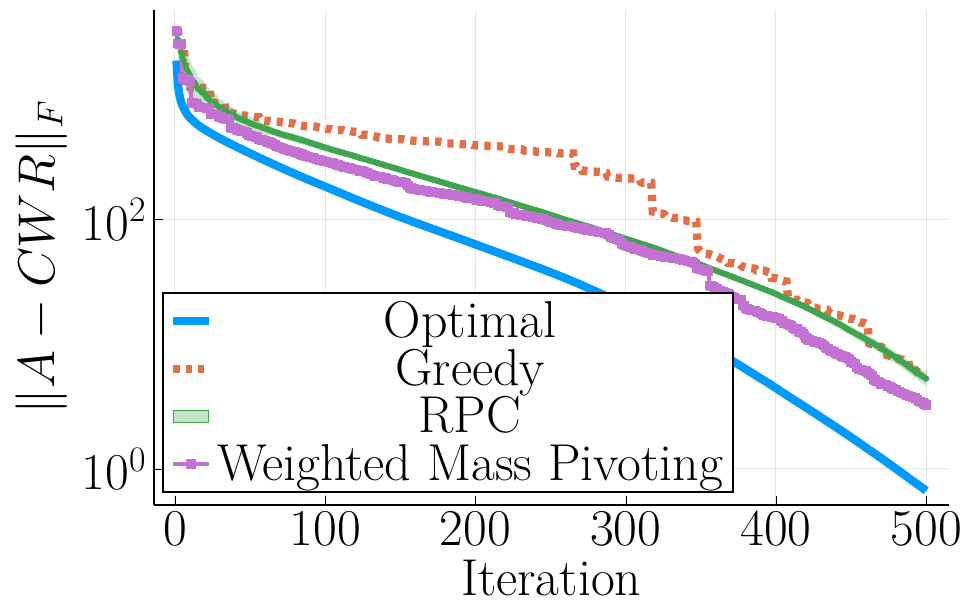}
        \end{minipage}
    % \hfill
        \begin{minipage}{0.32\textwidth}
        \centering
        \safeincludegraphics[width=\linewidth]{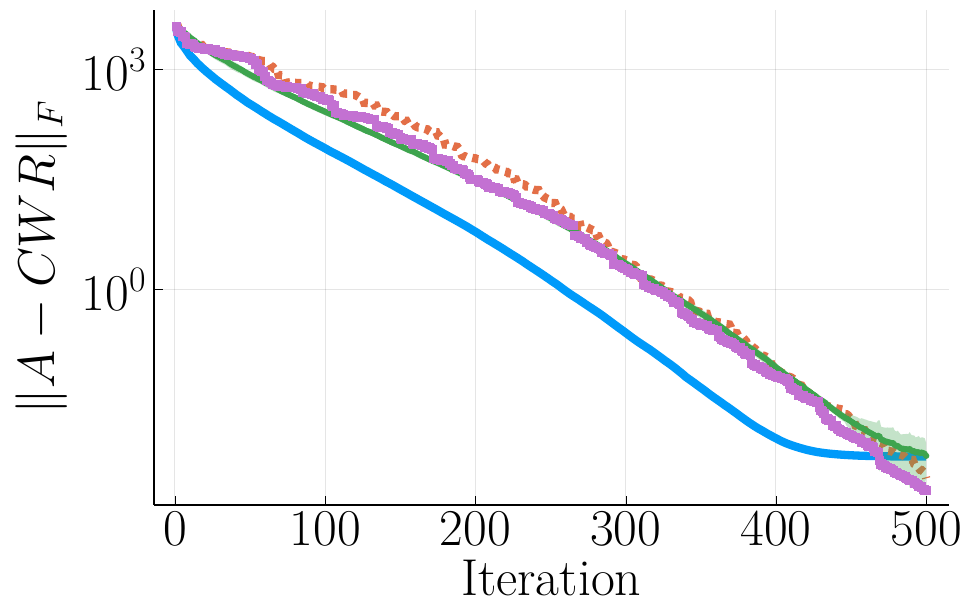}
        \end{minipage}
        \begin{minipage}{0.32\textwidth}
        \centering
        \safeincludegraphics[width=\linewidth]{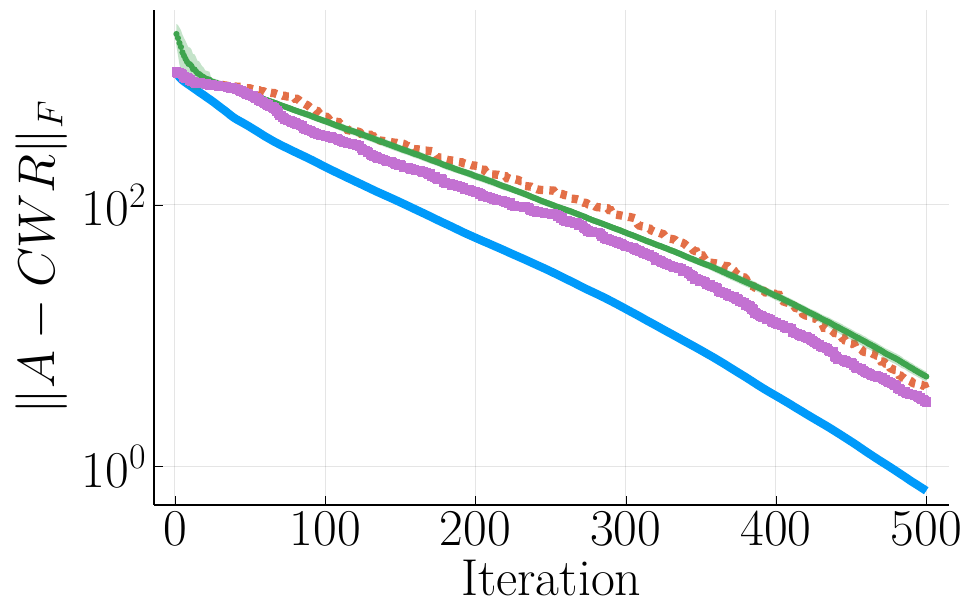}
        \end{minipage}
        \centering
        \small (b) ACA residuals and optimal residual given by SVD
        \label{fig:point_cloud_residuals}
    \end{minipage}
    \caption{Point configurations and Frobenius residuals for greedy
    diagonal pivoting, weighted-mass pivoting, and RPC. RPC reports the mean
    of $100$ runs with one standard deviation shaded. All experiments use
    $\ell=5$ and $\epsilon=10$.  }
    \label{fig:point_groups_aca}
\end{figure}

Figure \ref{fig:point_groups_aca} shows that weighted-mass pivoting gives the smallest or near smallest reported residual in all three configurations. Its advantage is most pronounced for the heterogeneous four-cluster example and the twelve-cluster grid. These configurations contain several distinct groups and therefore offer repeated opportunities to select pivots representing locally correlated residual mass. RPC also improves substantially over greedy pivoting in these two examples.

In the three-cluster experiment, we see a smaller separation among the methods. Each compact cluster appears to supply only a few  influential pivots; after these modes are removed, the three methods become more similar. In the twelve-cluster experiment, greedy pivoting eventually catches RPC, whereas weighted-mass pivoting maintains the smallest  residual. The weighted-mass pivoting strategy can likely avoid this ``catch-up'' phenomenon because it simplifies to strongly resemble greedy pivoting once all highly correlated pivots have been used. 

\subsubsection{Runtime}

The implementation uses \lstinline{CUDA.jl}
\cite{besard2018juliagpu}. On a $2500\times2500$ test matrix, an AMD Ryzen 9
7950X system with an NVIDIA GeForce RTX 4070 Ti and 32 GB of memory took,
on average, $8.167$ seconds for greedy pivoting and $8.634$ seconds for the
parallel weighted-mass implementation, measured using Julia's
\lstinline{@btime}. Although far from a scientific comparison of the algorithms' runtimes, this provides evidence that, with parallelized hardware, the additional weighted-score
calculation incurs only a modest wall-clock overhead.

Taken together, the experiments support the mechanism developed by the
exterior-algebraic analysis: a useful pivot need not be the largest residual
entry, but may instead be an index whose weighted direction is shared by many
residual rows and columns. The results do not provide a general convergence
guarantee for weighted-mass pivoting, and the score in
\eqref{eq:weighted-mass-score} remains a problem-dependent proxy for the
exact vector geometry.

\section{Conclusion}

We developed an exterior-algebraic representation of the adaptive cross-approximation residual. The classical determinant ratio becomes a weighted inner product between residual blades divided by a normalized pivot volume. This formulation separates spectral magnitudes, angular alignment, and pivot conditioning, and it recovers familiar singular-value estimates while revealing information that those estimates suppress. In particular, dependence among leading singular-vector coordinates explains how one pivot can annihilate, or nearly annihilate, rows and columns beyond the selected pair.

The rank-one specialization explicitly controls the residual by weighted sine and cosine factors. The spiral-kernel experiment shows how greedy largest-entry pivoting can ignore a broad correlated block, and the radial-basis Galerkin experiments demonstrate that a local weighted-mass proxy can exploit this structure without explicitly computing an SVD. The resulting rule is not universally optimal, but it provides a concrete example of how the geometric analysis can inform an implementable pivot strategy.

Several questions remain. A full convergence analysis of weighted-mass pivoting would require conditions connecting spatial proximity and the energy-inner-product geometry throughout the ACA iteration. Extending the blade formulation to infinite-dimensional feature maps could make the framework more natural for kernel operators and function-space discretizations. Finally, the connection with hierarchical matrices and blockwise low-rank structure \cite{Borm_IntroductionHierarchicalMatrices2003} suggests that exterior geometry may be useful for selecting not only individual pivots but also admissible blocks and local bases.

\paragraph{Code Availability}
All supporting numerical data was generated via original Julia code, which is freely available at \url{https://github.com/trevorgloe/WeightedMassGalerkin}.

\printbibliography

\appendix

\section{Weighted cosine and singular-vector coherence}\label{apx:cosine_vs_coherence}

This appendix records several rank-one observations about the average weighted cosine. They help explain why singular-vector coherence affects uniform and cosine-weighted pivot sampling. The results are descriptive rather than a recommendation to compute an SVD solely for pivot selection; in the supplied numerical tests, direct sampling proportional to weighted cosine was unstable late in the iteration and did not consistently improve on greedy pivoting.

Assume for this appendix that $\Sigma=\operatorname{diag}(\sigma_1,\ldots,\sigma_n)$ has strictly positive entries. For $U,V\in O(n)$, with rows $u_i$ and $v_j$, define
\[
C_\Sigma(U,V)_{ij}
=
\cos\angle_\sigma(u_i,v_j)
=
\frac{u_i^\top\Sigma v_j}
{\sqrt{u_i^\top\Sigma u_i}\sqrt{v_j^\top\Sigma v_j}}.
\]
Let
\(
\gamma
=
\frac{\|\sigma\|_1^2}{\|\sigma\|_2^2}.
\)
This is the effective spectral dimension introduced in Section \ref{sec:single_bound}.

\begin{prop} \label{prop:cos-identity-frame}
For every $U\in O(n)$,
\(
\|C_\Sigma(\mathbb{I},U)\|_F^2=n.
\)
\end{prop}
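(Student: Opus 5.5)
Write out the entries explicitly. With the first argument $\mathbb{I}$, the $i$th row is $e_i$. Its weighted norm is $\|e_i\|_\sigma^2=\sigma_i$, and its weighted inner product with a row $u_j$ of $U$ is $\inner{e_i,u_j}_\sigma=\sigma_i U_{ji}$. Hence
\[
C_\Sigma(\mathbb{I},U)_{ij}=\frac{\sigma_i U_{ji}}{\sqrt{\sigma_i}\,\|u_j\|_\sigma}=\frac{\sqrt{\sigma_i}\,U_{ji}}{\|u_j\|_\sigma}.
\]
The strict positivity of $\Sigma$ guarantees that every $\|u_j\|_\sigma$ is nonzero, since $u_j\neq 0$ for an orthogonal $U$. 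So all entries are well defined.

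Square and sum over $i$ first, with $j$ held fixed:
\[
\sum_{i=1}^n \frac{\sigma_i U_{ji}^2}{\|u_j\|_\sigma^2}=\frac{u_j^\top\Sigma u_j}{\|u_j\|_\sigma^2}=1.
\]
In words, each column $j$ of $C_\Sigma(\mathbb{I},U)$ is the $\sigma$-normalized vector $\widehat u_j$ written in the coordinates $\Sigma^{1/2}e_i$, so it has unit Euclidean norm. This is simply the statement that $\{e_i/\sqrt{\sigma_i}\}$ is a $\sigma$-orthonormal basis, so the squared cosines of a $\sigma$-unit vector against it sum to one. This is a Parseval identity in the weighted geometry.

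Summing over the $n$ columns then gives $\|C_\Sigma(\mathbb{I},U)\|_F^2=n$. The argument uses no property of $U$ beyond the nonvanishing of its rows, so orthogonality is needed only for well-definedness. The one thing to check carefully is which index is summed: the identity holds column-wise, not row-wise.
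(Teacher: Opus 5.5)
Your proposal is correct and follows the paper's proof: compute $\cos^2\angle_\sigma(e_i,u_j)=\sigma_i U_{ji}^2/\|u_j\|_\sigma^2$, sum over $i$ to get $1$ for each fixed $j$, then sum over the $n$ values of $j$. Your remark on well-definedness (strict positivity of $\Sigma$ and nonzero rows of an orthogonal $U$) is a detail the paper leaves implicit.
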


\begin{proof}
For the $i$th standard basis vector and row $u_j$,
\[
\cos^2\angle_\sigma(e_i,u_j)
=
\frac{\sigma_i(u_j)_i^2}{\|u_j\|_\sigma^2}.
\]
Summing first over $i$ gives $1$ for every $j$, and summing over $j$ gives $n$.
\end{proof}

\begin{prop} \label{prop:avg_cos_hadamard}
Suppose a Hadamard matrix of order $n$ exists, and let $H$ denote its orthogonal scaling, so every entry has magnitude $n^{-1/2}$. Then
\[
\|C_\Sigma(H,H)\|_F^2
=
\frac{n^2}{\gamma}
=
n^2\frac{\|\sigma\|_2^2}{\|\sigma\|_1^2}.
\]
\end{prop}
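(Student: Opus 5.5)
The plan is to compute every weighted cosine $\cos\angle_\sigma(h_i,h_j)$ explicitly, using the fact that all rows of $H$ have the same entrywise magnitudes.

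First, the normalizing factors. Every entry of $H$ has magnitude $n^{-1/2}$, so for each row $h_i$
\[
\|h_i\|_\sigma^2=\sum_{q=1}^n\sigma_q (h_i)_q^2=\frac{1}{n}\sum_{q=1}^n\sigma_q=\frac{\|\sigma\|_1}{n}.
\]
This does not depend on $i$. Because $\Sigma$ has strictly positive entries, $\|\sigma\|_1>0$, so every weighted angle is defined. Hence
\[
\|C_\Sigma(H,H)\|_F^2=\frac{n^2}{\|\sigma\|_1^2}\sum_{i,j}\inner{h_i,h_j}_\sigma^2
=\frac{n^2}{\|\sigma\|_1^2}\,\|H\Sigma H^\top\|_F^2 .
\]
The middle equality holds because $\inner{h_i,h_j}_\sigma=(H\Sigma H^\top)_{ij}$.

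Second, the numerator. Since $H$ is orthogonal, the Frobenius norm is unitarily invariant, so $\|H\Sigma H^\top\|_F^2=\|\Sigma\|_F^2=\|\sigma\|_2^2$. Substituting gives
\[
\|C_\Sigma(H,H)\|_F^2=n^2\frac{\|\sigma\|_2^2}{\|\sigma\|_1^2}=\frac{n^2}{\gamma}.
\]

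No step is a real obstacle. The only point to handle carefully is the constancy of the weighted row norms, which is exactly where the Hadamard (flat-modulus) structure is used. By contrast, in Proposition \ref{prop:cos-identity-frame} the normalization varies across rows, and that computation proceeds differently. The final step also needs the orthogonality of $H$ after scaling, which holds because $H^\top H=\mathbb{I}$ for a Hadamard matrix scaled by $n^{-1/2}$.
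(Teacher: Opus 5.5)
Your proof is correct and follows the paper's own argument: you use the constant weighted row norm $\|h_i\|_\sigma^2=\|\sigma\|_1/n$ to factor out the normalization, then apply orthogonal invariance of the Frobenius norm to get $\|H\Sigma H^\top\|_F^2=\|\sigma\|_2^2$. Your added remarks, that strict positivity of $\Sigma$ makes every angle well defined and that scaling by $n^{-1/2}$ gives $H^\top H=\mathbb{I}$, are correct details the paper leaves implicit.
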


\begin{proof}
Every row $h_i$ satisfies
$\|h_i\|_\sigma^2=\|\sigma\|_1/n$. Therefore
\(
\|C_\Sigma(H,H)\|_F^2
=
\frac{n^2}{\|\sigma\|_1^2}
\sum_{i,j}(h_i^\top\Sigma h_j)^2.
\)
The sum is $\|H\Sigma H^\top\|_F^2=\|\Sigma\|_F^2=\|\sigma\|_2^2$ by orthogonal invariance of the Frobenius norm.
\end{proof}

\begin{prop}\label{prop:avg_cos_incoh}
Suppose $U$ and $V$ are $\mu$-incoherent in the entrywise sense,
\[
\max_{i,q}|U_{iq}|\leq\mu,
\qquad
\max_{j,q}|V_{jq}|\leq\mu.
\]
Then
\[
\|C_\Sigma(U,V)\|_F^2
\geq
\frac{1}{\mu^4\gamma}
=
\frac{\|\sigma\|_2^2}{\mu^4\|\sigma\|_1^2}.
\]
\end{prop}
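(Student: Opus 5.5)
The plan is to mirror the proof of Proposition~\ref{prop:avg_cos_hadamard}, replacing the exact row-norm identity available for Hadamard matrices with an upper bound supplied by incoherence. The key observation is that the numerators of the squared cosines sum to a basis-independent quantity, while incoherence controls every denominator uniformly.

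\emph{Step 1: bound the denominators.} For each row $u_i$,
\[
\|u_i\|_\sigma^2=\sum_{q=1}^n\sigma_q U_{iq}^2\leq \mu^2\sum_{q=1}^n\sigma_q=\mu^2\|\sigma\|_1 .
\]
This uses the entrywise incoherence assumption and the positivity of every $\sigma_q$. The same argument gives $\|v_j\|_\sigma^2\leq\mu^2\|\sigma\|_1$. Since $\Sigma$ has strictly positive diagonal and $U,V$ are orthogonal, no row vanishes. Hence all weighted norms are positive and every entry of $C_\Sigma(U,V)$ is defined.

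\emph{Step 2: bound the entries from below.} Substituting these bounds into the definition of the weighted cosine gives
\[
C_\Sigma(U,V)_{ij}^2=\frac{(u_i^\top\Sigma v_j)^2}{\|u_i\|_\sigma^2\|v_j\|_\sigma^2}\geq\frac{(u_i^\top\Sigma v_j)^2}{\mu^4\|\sigma\|_1^2}.
\]

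\emph{Step 3: sum over all entries.} The numerator $u_i^\top\Sigma v_j$ is exactly the $(i,j)$ entry of $U\Sigma V^\top$. Summing over $i,j$ and using the orthogonal invariance of the Frobenius norm,
\[
\sum_{i,j}(u_i^\top\Sigma v_j)^2=\|U\Sigma V^\top\|_F^2=\|\Sigma\|_F^2=\|\sigma\|_2^2 .
\]
This yields $\|C_\Sigma(U,V)\|_F^2\geq \|\sigma\|_2^2/(\mu^4\|\sigma\|_1^2)=1/(\mu^4\gamma)$, which is the claimed bound.

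I do not expect a real obstacle. The only points needing care are that the uniform denominator bound in Step~1 really requires $\sigma_q\geq0$, and that the orthogonality of $U$ and $V$ is what makes the numerator sum basis-independent. As a sanity check, $\mu=n^{-1/2}$ recovers exactly the equality $n^2/\gamma$ of Proposition~\ref{prop:avg_cos_hadamard}.
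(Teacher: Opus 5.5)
Your proposal is correct and follows essentially the same argument as the paper: incoherence bounds each weighted row norm by $\mu^2\|\sigma\|_1$, the resulting uniform denominator comes out of the sum, and orthogonal invariance gives $\sum_{i,j}(u_i^\top\Sigma v_j)^2=\|U\Sigma V^\top\|_F^2=\|\sigma\|_2^2$. Your remarks that every cosine is well defined (no row of an orthogonal matrix vanishes and $\Sigma$ is strictly positive) and that $\mu=n^{-1/2}$ recovers the Hadamard value $n^2/\gamma$ are accurate details the paper leaves implicit.
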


\begin{proof}
The weighted row norms satisfy
$\|u_i\|_\sigma^2\leq\mu^2\|\sigma\|_1$ and likewise for $v_j$. Hence
\[
\|C_\Sigma(U,V)\|_F^2
\geq
\frac{1}{\mu^4\|\sigma\|_1^2}
\sum_{i,j}(u_i^\top\Sigma v_j)^2.
\]
The final sum is
$\|U\Sigma V^\top\|_F^2=\|\Sigma\|_F^2=\|\sigma\|_2^2$.
\end{proof}

For a single orthogonal frame, define
\(
f(U)=\|C_\Sigma(U,U)\|_F^2.
\)
The identity frame attains the smallest possible value, while an orthogonally scaled Hadamard frame is stationary.

\begin{theorem}\label{thm:min_max_weighted_cos_sym}
For every $U\in O(n)$,
\(
f(U)\geq n,
\)
and equality is attained at $U=I$. If an orthogonally scaled Hadamard matrix $H$ exists, then $H$ is a stationary point of $f$ on $O(n)$.
\end{theorem}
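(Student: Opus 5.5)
The statement has three parts: the lower bound $f(U)\ge n$, the equality case $U=\mathbb{I}$, and stationarity of a scaled Hadamard frame $H$. We handle them in that order.

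\textbf{Lower bound.} The diagonal entries of $C_\Sigma(U,U)$ are $\cos\angle_\sigma(u_i,u_i)=1$, since each $u_i$ is a unit vector in $\R^n$ and therefore $\|u_i\|_\sigma^2=\sum_q\sigma_q(u_i)_q^2>0$ when $\Sigma\succ0$. Dropping the nonnegative off-diagonal squares gives $f(U)\ge\sum_i1=n$.

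\textbf{Equality at $U=\mathbb{I}$.} For the identity, distinct rows $e_i,e_j$ satisfy $e_i^\top\Sigma e_j=0$, so every off-diagonal cosine vanishes and $f(\mathbb{I})=n$. This is also consistent with Proposition~\ref{prop:cos-identity-frame} applied with $U=\mathbb{I}$.

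\textbf{Stationarity of $H$.} We parametrize a curve through $H$ on $O(n)$ by $U(t)=He^{tK}$ with $K$ skew-symmetric, and show that $\frac{d}{dt}f(U(t))|_{t=0}=0$ for every such $K$.

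Write $G(t)=U\Sigma U^\top$ and $d_i(t)=G_{ii}$. Then
\[
f=\sum_{i,j}\frac{G_{ij}^2}{d_id_j}.
\]
At $t=0$, Proposition~\ref{prop:avg_cos_hadamard}'s computation gives $d_i=\|\sigma\|_1/n=:c$ for every $i$. Hence
\[
f'(0)=\frac{2}{c^2}\sum_{i,j}G_{ij}\dot G_{ij}-\frac{2}{c^3}\sum_{i,j}G_{ij}^2\,\dot d_i,
\]
using the symmetry of the sum in $i$ and $j$.

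The first term is $\frac{1}{c^2}\frac{d}{dt}\|G\|_F^2$. Since $\|U\Sigma U^\top\|_F^2=\|\Sigma\|_F^2$ for orthogonal $U$, this derivative vanishes.

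For the second term, write $\dot G=H(K\Sigma-\Sigma K)H^\top$, so that $\dot d_i=h_i^\top[K,\Sigma]h_i$. The key structural claim is that the row sums $r_i=\sum_jG_{ij}^2=(G^2)_{ii}=h_i^\top\Sigma^2h_i$ equal $\|\sigma\|_2^2/n$ for every $i$. This holds because $(h_i)_q^2=1/n$ for all $i,q$, by the Hadamard entry magnitude. The second term therefore reduces to a constant times $\sum_i\dot d_i=\operatorname{tr}\dot G=\operatorname{tr}([K,\Sigma])=0$.

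Hence $f'(0)=0$ for all skew $K$, so $H$ is stationary.

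\textbf{Main obstacle.} The only delicate point is the stationarity bookkeeping: differentiating the normalizations $d_id_j$ and recognizing that both the constant-diagonal property and the constant row sums of $G^2$ follow from flatness of the Hadamard entries. We would verify the diagonal-derivative term carefully. In particular, $\dot d_i$ need not vanish individually; only its sum, weighted by the constant $r_i$, cancels.
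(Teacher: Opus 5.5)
Your proof is correct and follows the paper's argument essentially step for step. The lower bound comes from the unit diagonal cosines, equality from the vanishing off-diagonal entries at $\mathbb{I}$, and at $H$ you use the same two-term derivative: the $\langle G,\dot G\rangle_F$ term vanishes, and the normalization term reduces to the constant $(G^2)_{ii}=\|\sigma\|_2^2/n$ times $\operatorname{tr}\dot G=0$. The only cosmetic differences are that you perturb by right translation $He^{tK}$ instead of $\dot U=KU$, and you dispose of the first term by orthogonal invariance of $\|U\Sigma U^\top\|_F$ rather than the trace identity $\operatorname{tr}(BKB-B^2K)=0$.
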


\begin{proof}
The diagonal entries of $C_\Sigma(U,U)$ are all one, so $f(U)\geq n$. At $U=I$, the off-diagonal entries vanish and $f(I)=n$.

For stationarity, set
\(
B=U\Sigma U^\top,
 d_i=B_{ii}.
\) 
Then
\(
f(U)=\sum_{i,j}\frac{B_{ij}^2}{d_id_j}.
\)
A tangent perturbation can be written $\dot U=KU$ with $K^\top=-K$, which gives
$\dot B=KB-BK$. At $U=H$, every diagonal entry equals
$c=\|\sigma\|_1/n$, and every diagonal entry of $B^2=H\Sigma^2H^\top$ equals
$r=\|\sigma\|_2^2/n$. Differentiating at this point yields
\[
Df_{\dot U}
=
\frac{2}{c^2}\inner{B,\dot B}_F
-
\frac{2}{c^3}\sum_i(B^2)_{ii}\dot B_{ii}.
\]
The first term vanishes because
\(\inner{B,KB-BK}_F=\operatorname{tr}(BKB-B^2K)=0. \)
The second vanishes because $(B^2)_{ii}=r$ is constant and $\sum_i\dot B_{ii}=\operatorname{tr}(KB-BK)=0$. Hence every tangent directional derivative is zero at $H$.
\end{proof}

Theorem \ref{thm:min_max_weighted_cos_sym} does not establish that the Hadamard frame is a global maximizer. The supplied numerical experiments suggest the conjecture
\(
\|C_\Sigma(U,V)\|_F^2
\leq
\frac{n^2}{\gamma}
\)
with equality at a common Hadamard frame, but a proof is not provided here.

\subsection{Sampling proportional to weighted cosine}

Suppose a rank-one pivot $(i,j)$ is sampled with probability
\begin{equation}\label{eq:cosine-sampling-probability}
\prob\{(i,j)\}
=
\frac{C_\Sigma(U,V)_{ij}^2}
{\|C_\Sigma(U,V)\|_F^2}.
\end{equation}
Zero-cosine pairs receive zero probability. The cosine denominator in Theorem \ref{thm:single_iter_cos_bound} then cancels in expectation.

\begin{prop}\label{prop:cosine-sampling-entry}
Let $E^{(i,j)}$ be the residual after the rank-one pivot $(i,j)$. Under
\eqref{eq:cosine-sampling-probability},
\begin{align*}
\Exp\left|E^{(i,j)}_{\ell p}\right|^2
&\leq
\frac{\|u_\ell\|_\sigma^2\|v_p\|_\sigma^2}
{\|C_\Sigma(U,V)\|_F^2}
\left(\sum_i\sin^2\angle_\sigma(u_\ell,u_i)\right)
\left(\sum_j\sin^2\angle_\sigma(v_p,v_j)\right).
\end{align*}
\end{prop}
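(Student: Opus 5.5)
The plan is to combine the entrywise rank-one bound of Theorem~\ref{thm:single_iter_cos_bound} with the sampling law \eqref{eq:cosine-sampling-probability}. The squared cosine in the probability is designed to cancel the squared cosine in the denominator of that bound.

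First, fix $(\ell,p)$ and a pivot $(i,j)$ with $C_\Sigma(U,V)_{ij}\neq0$. Squaring \eqref{eq:rank-one-angle-bound} gives
\[
\left|E^{(i,j)}_{\ell p}\right|^2
\leq
\frac{\|u_\ell\|_\sigma^2\|v_p\|_\sigma^2\sin^2\angle_\sigma(u_\ell,u_i)\sin^2\angle_\sigma(v_p,v_j)}{C_\Sigma(U,V)_{ij}^2}.
\]
Pairs with zero cosine have zero probability and are excluded from the expectation. This matters because those pivots $A_{ij}=0$ are inadmissible anyway. All weighted norms are positive, since $\Sigma$ is strictly positive and the rows of orthogonal $U,V$ are nonzero, so every angle is defined.

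Second, write the expectation as $\sum_{(i,j):C_{ij}\neq0}\prob\{(i,j)\}\,|E^{(i,j)}_{\ell p}|^2$ and insert the bound. The factor $C_{ij}^2$ cancels, leaving
\[
\frac{\|u_\ell\|_\sigma^2\|v_p\|_\sigma^2}{\|C_\Sigma(U,V)\|_F^2}\sum_{(i,j):C_{ij}\neq0}\sin^2\angle_\sigma(u_\ell,u_i)\sin^2\angle_\sigma(v_p,v_j).
\]

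Third, enlarge the restricted sum to the sum over all pairs $(i,j)$; every term is nonnegative, so this only increases it. The full double sum then factors into the product of the two single sums over $i$ and over $j$, which is the claimed bound.

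The only delicate point is bookkeeping rather than analysis. One has to check that Theorem~\ref{thm:single_iter_cos_bound} applies to every sampled pivot, i.e.\ that $A_{ij}\neq0$ is equivalent to $C_{ij}\neq0$; this holds because the weighted norms are positive. One also needs $\|C_\Sigma\|_F>0$ for the distribution to be defined, which follows since the diagonal of $U\Sigma V^\top$ cannot all vanish, or more simply since $\|U\Sigma V^\top\|_F>0$.
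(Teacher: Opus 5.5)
Your proof is correct and follows the paper's argument: square Theorem~\ref{thm:single_iter_cos_bound}, weight by the sampling probability so the cosine factors cancel, then extend the nonnegative sum to all pairs $(i,j)$ and factor it. One small slip: the diagonal of $U\Sigma V^\top$ can vanish (e.g.\ $U=\mathbb{I}$ with $V$ a fixed-point-free permutation), so drop that remark and use only $\|U\Sigma V^\top\|_F=\|\Sigma\|_F>0$ to get $\|C_\Sigma(U,V)\|_F>0$.
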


\begin{proof}
Square the bound in Theorem \ref{thm:single_iter_cos_bound}, multiply by the probability in \eqref{eq:cosine-sampling-probability}, and sum over $(i,j)$. The cosine factors cancel, and the remaining double sum separates.
\end{proof}

This yields a coarse incoherence-dependent Frobenius estimate.

\begin{corollary}\label{corr:cosine-sampling-incoherent}
Suppose $U$ and $V$ are $\mu$-incoherent and let
$\eta=\sqrt n\,\mu$. Under cosine-squared sampling,
\(\Exp\|E^{(i,j)}\|_F^2\leq\big(\eta^4\gamma-1\big)^2\|A\|_F^2.
\)
\end{corollary}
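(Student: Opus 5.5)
Sum the entrywise estimate of Proposition \ref{prop:cosine-sampling-entry} over all $\ell$ and $p$; this gives
\[
\Exp\|E^{(i,j)}\|_F^2\leq\frac{1}{\|C_\Sigma(U,V)\|_F^2}\,L_{\mathrm{tot}}R_{\mathrm{tot}},
\]
where
\[
L_{\mathrm{tot}}=\sum_{\ell}\|u_\ell\|_\sigma^2\sum_i\sin^2\angle_\sigma(u_\ell,u_i),
\qquad
R_{\mathrm{tot}}=\sum_{p}\|v_p\|_\sigma^2\sum_j\sin^2\angle_\sigma(v_p,v_j).
\]
The plan is to bound the two factors $L_{\mathrm{tot}}$ and $R_{\mathrm{tot}}$ from above, bound $\|C_\Sigma(U,V)\|_F^2$ from below, and compare the result with $\|A\|_F^2=\|\sigma\|_2^2$.

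For the lower bound I use Proposition \ref{prop:avg_cos_incoh}, which gives
\[
\|C_\Sigma(U,V)\|_F^{-2}\leq\mu^4\gamma\,\|\sigma\|_2^{-2}\cdot\|\sigma\|_2^2/\|\sigma\|_2^2,
\]
that is, $\|C_\Sigma\|_F^{-2}\leq\mu^4\|\sigma\|_1^2/\|\sigma\|_2^2$.

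For $L_{\mathrm{tot}}$, write $\sin^2=1-\cos^2$ and expand:
\[
L_{\mathrm{tot}}=n\sum_\ell\|u_\ell\|_\sigma^2-\sum_{\ell,i}\frac{(u_\ell^\top\Sigma u_i)^2}{\|u_i\|_\sigma^2}.
\]
Since $U$ is orthogonal, $\sum_\ell\|u_\ell\|_\sigma^2=\operatorname{tr}(\Sigma)=\|\sigma\|_1$. In the subtracted term, $\sum_\ell(u_\ell^\top\Sigma u_i)^2=(U\Sigma^2U^\top)_{ii}=\|u_i\|_{\sigma^2}^2$. Incoherence gives $\|u_i\|_\sigma^2\leq\mu^2\|\sigma\|_1$, so the subtracted term is at least $\|\sigma\|_2^2/(\mu^2\|\sigma\|_1)$. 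Hence
\[
L_{\mathrm{tot}}\leq n\|\sigma\|_1-\frac{\|\sigma\|_2^2}{\mu^2\|\sigma\|_1},
\]
and the same bound holds for $R_{\mathrm{tot}}$.

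Multiplying the pieces gives
\[
\Exp\|E\|_F^2\leq\frac{\mu^4\|\sigma\|_1^2}{\|\sigma\|_2^2}\left(n\|\sigma\|_1-\frac{\|\sigma\|_2^2}{\mu^2\|\sigma\|_1}\right)^2 .
\]
Factor out $\|\sigma\|_2^2/(\mu^2\|\sigma\|_1)$ from the parenthesis. This turns it into
\[
\frac{\|\sigma\|_2^2}{\mu^2\|\sigma\|_1}\bigl(n\mu^2\gamma-1\bigr)=\frac{\|\sigma\|_2^2}{\mu^2\|\sigma\|_1}(\eta^2\gamma-1).
\]
The prefactor then cancels, leaving
\[
\Exp\|E\|_F^2\leq\|\sigma\|_2^2(\eta^2\gamma-1)^2.
\]
The statement has $\eta^4\gamma-1$ instead of $\eta^2\gamma-1$. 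Because $\eta=\sqrt n\mu\geq1$ (rows of an orthogonal matrix have some entry of size at least $n^{-1/2}$), we have $\eta^2\leq\eta^4$ and $\eta^2\gamma-1\geq0$. The bound I obtain is therefore at least as strong as the claimed one, and the weaker stated form follows by monotonicity.

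The main care point is the algebra in the final step. Each $\sin^2$ must be kept as $1-\cos^2$ with the exact orthogonality identities, rather than bounded crudely by $1$. Otherwise the factor becomes $n\mu^2\gamma$, which is $\eta^2\gamma$ without the $-1$, and the subtraction disappears. Taking the worst-case lower bound on $1/\|u_i\|_\sigma^2$ is what produces the $-1$.
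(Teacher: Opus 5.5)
Your argument is correct, and it actually proves the sharper bound $\Exp\|E^{(i,j)}\|_F^2\le(\eta^2\gamma-1)^2\|A\|_F^2$, from which the stated bound follows as you say.

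The paper's proof is cruder at exactly the step you flag. It pulls the outer weights out of the sums using $\|u_\ell\|_\sigma^2,\|v_p\|_\sigma^2\le\mu^2\|\sigma\|_1$. This gives $\mu^4\|\sigma\|_1^2(n^2-f(U))(n^2-f(V))/f(U,V)$, with $f(U)=\|C_\Sigma(U,U)\|_F^2$. It then applies Proposition~\ref{prop:avg_cos_incoh} uniformly to $f(U,V)$, $f(U)$ and $f(V)$. The advantage is that everything stays in terms of the cosine-matrix norms studied in the appendix. The cost is that the leading term effectively replaces $\sum_\ell\|u_\ell\|_\sigma^2=\operatorname{tr}\Sigma=\|\sigma\|_1$ by $n\mu^2\|\sigma\|_1=\eta^2\|\sigma\|_1$, losing a factor $\eta^2$. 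The subtracted terms of the two bounds coincide, which is why the paper ends with $\eta^4\gamma$ where you get $\eta^2\gamma$.

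You keep the weights inside the sums, let $\|u_\ell\|_\sigma^2\cos^2\angle_\sigma(u_\ell,u_i)=(u_\ell^\top\Sigma u_i)^2/\|u_i\|_\sigma^2$ cancel one normalization, and use the orthogonality identity $\sum_\ell(u_\ell^\top\Sigma u_i)^2=(U\Sigma^2U^\top)_{ii}$. As a result you invoke incoherence only once. The two bounds agree when $\eta=1$; that is the Hadamard case, where your estimate of $L_{\mathrm{tot}}$ is an equality. Yours is strictly better otherwise.

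Two small corrections:
\begin{itemize}
\item In your lower-bound step, the first expression $\mu^4\gamma\|\sigma\|_2^{-2}\cdot\|\sigma\|_2^2/\|\sigma\|_2^2$ equals $\mu^4\gamma/\|\sigma\|_2^2$, which is wrong. Delete it and keep the correct form $\mu^4\|\sigma\|_1^2/\|\sigma\|_2^2=\mu^4\gamma$.
\item The claim $\eta^2\gamma-1\ge0$ also needs $\gamma\ge1$, i.e.\ $\|\sigma\|_1\ge\|\sigma\|_2$; state it.
\end{itemize}
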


\begin{proof}
Write \(f(U,V)=\|C_\Sigma(U,V)\|_F^2, f(U)=\|C_\Sigma(U,U)\|_F^2. \)
Summing Proposition \ref{prop:cosine-sampling-entry} over $\ell,p$ and using $\|u_\ell\|_\sigma^2,\|v_p\|_\sigma^2\leq\mu^2\|\sigma\|_1$ gives
\[\Exp\|E^{(i,j)}\|_F^2\leq\frac{\mu^4\|\sigma\|_1^2}{f(U,V)}\big(n^2-f(U)\big)\big(n^2-f(V)\big).
\]
By Proposition \ref{prop:avg_cos_incoh},
\(f(U,V),f(U),f(V)\geq\frac{n^2}{\eta^4\gamma}.
\)
Substitution yields
\[
\Exp\|E^{(i,j)}\|_F^2
\leq
\eta^8\gamma\|\sigma\|_1^2
\left(1-\frac{1}{\eta^4\gamma}\right)^2.
\]
Since $\|A\|_F^2=\|\sigma\|_2^2$ and
$\gamma=\|\sigma\|_1^2/\|\sigma\|_2^2$, the right-hand side equals
$\big(\eta^4\gamma-1\big)^2\|A\|_F^2$.
\end{proof}

The bound is usually loose and can exceed the trivial scale $\|A\|_F^2$. It nevertheless separates two effects: $\eta$ measures singular-vector incoherence and $\gamma$ measures effective spectral dimension. The one-step estimate cannot be iterated without controlling how both quantities change after each ACA update.

\section{Nearest-neighbor preprocessing}
\label{apx:nearest-neighbor}
\begin{algorithm}[t]
\caption{Insertion search for the $\ell$ nearest centers to one query point}
\label{alg:l_closest_gpu}
\begin{algorithmic}[1]
\Require Centers $X=\{x_j\}_{j=1}^n$, query index $i$, neighborhood size $\ell$
\Ensure Sorted neighbor indices $C[1{:}\ell]$ and distances $D[1{:}\ell]$
\State $C[1{:}\ell]\gets-1$, $D[1{:}\ell]\gets+\infty$
\For{$j=1,\ldots,n$}
    \State $d\gets\|x_i-x_j\|_2$
    \For{$t=1,\ldots,\ell$}
        \If{$d<D[t]$}
            \For{$s=\ell,\ell-1,\ldots,t+1$}
                \State $C[s]\gets C[s-1]$
                \State $D[s]\gets D[s-1]$
            \EndFor
            \State $C[t]\gets j$, $D[t]\gets d$
            \State \algorithmicbreak
        \EndIf
    \EndFor
\EndFor
\State \Return $C,D$
\end{algorithmic}
\end{algorithm}
Weighted-mass pivoting requires the neighborhood sets
$\mathcal N_i$ containing the $\ell$ nearest centers to each $x_i$. These
neighborhoods depend only on the point geometry and are therefore computed
once before the ACA iteration. The implementation used in the experiments
assigns one query center to each GPU thread. Algorithm
\ref{alg:l_closest_gpu} describes the insertion search performed by a single
thread. 

For one query center, Algorithm \ref{alg:l_closest_gpu} requires
$\O(nl)$ arithmetic and $\O(\ell)$ temporary storage. Applying the search to all $n$ centers therefore requires $\O(n^2l)$ total work and produces an $\O(nl)$ neighborhood table. The $n$ searches are independent, however, and can be executed in parallel. If the neighborhood convention excludes the query center itself, the case $j=i$ is simply omitted from the outer loop.  The insertion search is convenient for the moderate neighborhood sizes used in the experiments. For larger point sets or larger neighborhoods, the same weighted-mass algorithm can instead use a spatial data structure or an approximate nearest-neighbor method; the pivot rule itself is independent of how the neighborhoods are constructed.

\end{document}